\newtheorem{theorem}{Theorem}[section]
\newtheorem{claim}[theorem]{Claim}
\newtheorem{proposition}[theorem]{Proposition}
\newtheorem{corollary}[theorem]{Corollary}
\renewenvironment{proof}{\noindent{\bfseries Proof\,}}{\hfill$\Box$}
\newenvironment{proofof}[1]{\par
  \pushQED{\qed}%
  \normalfont \topsep6\p@\@plus6\p@\relax
  \trivlist
  \item[\hskip\labelsep
        \bfseries
    Proof of #1\@addpunct{.}]\ignorespaces
}{%
  $\quad$\hfill$\Box$\endtrivlist\@endpefalse
}
\def \C   	{{\cal C}}
\def \End   	{\text{End}}
\def \F   	{{\cal F}}
\def \G   	{{\cal G}}
\def \ham	{{\mathcal H}_n} 
\def \M   	{{\cal M}}
\def \N		{\mathbb{N}}
\def \P		{{\cal P}}
\def \S		{{\cal S}}
\def \T   	{{\cal T}}
\def \pkf	{{\P_{k,n}}} 
\def \pm	{{\M}_n} 
\def \skf	{{\S_{k,n}}} 
\title{How fast can Maker win in fair biased games?
\thanks{The research was initiated at FU Berlin, during the research visit of Mirjana Mikala\v{c}ki supported by DAAD.}}
\author{Dennis Clemens  \thanks {Email: dennis.clemens@tuhh.de}\\
 \small{ \textit{Technische Universit\"at Hamburg-Harburg, Institut f\"ur Mathematik,}}\\[-0.8ex]
 \small{\textit{Am Schwarzenberg-Campus 3, 21073 Hamburg, Germany.}}\\
  \and Mirjana Mikala\v{c}ki \thanks{Research partly supported by Ministry of Science and Technological Development, Republic of Serbia, and Provincial Secretariat for
Science, Province of Vojvodina. Email: mirjana.mikalacki@dmi.uns.ac.rs}\\
\small{ \textit{Department of Mathematics and Informatics, Faculty of Sciences,}}\\[-0.8ex]
 \small{\textit{ University of
Novi Sad, Serbia.}}} 
\date{}
\begin{document}
\maketitle

\begin{abstract}
We study $(a:a)$ Maker-Breaker games played on the edge set of the complete graph on $n$ vertices. In the following four games -- perfect matching game, Hamilton cycle game, star factor game and path factor game, our goal is to determine the least number of moves which Maker needs in order to win these games. Moreover, for all games except for the star factor game, we show how Red can win in the strong version of these games.

\end{abstract}

\section{Introduction}
Let $a$ and $b$ be two positive integers, let $X$ be a finite set and $\F \subseteq 2^X$ be a family of the subsets of $X$. In the $(a:b)$ \textit{positional game} $(X,\F)$, two players take turns in claiming $a$, respectively $b$, previously unclaimed elements of $X$, with one of them going first. The set $X$ is referred to as the \textit{board} of the game, while the elements of $\F$ are referred to as the \textit{winning sets}. When there is no risk of confusion on which board the game is played, we just use $\F$ to denote the game. The integers $a$ and $b$ are referred to as \textit{biases} of the players. When $a=b$, the game is said to be \textit{fair}. If $a=b=1$, the game is called \textit{unbiased}. Otherwise, the game is called \textit{biased}. If a player has a strategy to win against any strategy of the other player, this strategy is called \textit{a winning strategy}.

In the $(a:b)$ \textit{Maker-Breaker} positional game $(X,\F)$, the two players are called \textit{Maker} and \textit{Breaker}. Maker wins the game $\F$ at the moment she claims all the elements of some $F\subseteq \F$. If Maker did not win by the time all the elements of $X$ are claimed by some player, then Breaker wins the game $\F$. In order to show that Maker wins the game as both first and second player, we will assume in this paper that Breaker starts the game (as being the first player can only be an advantage in Maker-Breaker games).

It is very natural to play Maker-Breaker games on the edge set of a given graph $G$ (see e.g.~\cite{Beckbook,HKSSBook}). 
Here, we focus on the $(a:b)$ games played on the edge set of the complete graph on $n$ vertices, $K_n$, where $n$ is a sufficiently large integer. That is, in this case the board is $X=E(K_n)$.

For example: in the \textit{connectivity game}, $\T_n$, the winning sets are all spanning trees of $K_n$; in the \textit{perfect matching game}, $\pm$, the winning sets are all independent edge sets of size $\lfloor n/2 \rfloor$ (note that in case $n$ is odd, this matching covers all but one vertex in $K_n$); in the \textit{Hamilton cycle game}, $\ham$, the winning sets are all Hamilton cycles of $K_n$; in the $k$-vertex-connectivity game, $\C^k_n$, for $k\in \N$, the winning sets are all $k$-vertex-connected graphs on $n$ vertices. 

It is not very difficult to see that Maker wins all aforementioned unbiased games. Therefore, we can ask the following question: \textit{How quickly can Maker win the game?} With parameter $\tau_{\F}(a:b)$ we denote the shortest \textit{duration} of the $(a:b)$ Maker-Breaker game $\F$, i.e.\ the least number of moves $t$ such that Maker has a strategy to win the $(a:b)$ game $\F$ within $t$ moves. For completeness, we say that $\tau_{\F}(a:b)=\infty$ if Breaker has a winning strategy.

It was shown in~\cite{Lehman} that, for $n\geq 4$, $\tau_{\T_n}(1:1)=n-1$, which is optimal. In~\cite{HKSS09} it was proved that $\tau_{\pm}(1:1)=n/2+1$, when $n$ is even, and $\tau_{\pm}(1:1)=\lceil n/2\rceil$, when $n$ is odd and also that $\tau_{\ham}(1:1)\leq n+2$ and $\tau_{\C^k_n}(1:1)=kn/2+o(n)$. 
Hefetz and Stich in~\cite{HS09} showed that $\tau_{\ham}(1:1)=n+1$, and Ferber and Hefetz~\cite{FH} recently showed that $\tau_{\C^k_n}(1:1)=\lfloor kn/2\rfloor+1$. Moreover, there are corresponding results when the mentioned games are played on graphs that are not complete, see e.g.~\cite{CFKL}.

In this paper, we are particularly interested in $(a:a)$ Maker-Breaker games on $E(K_n)$, for constant $a\geq 1$. Although these games are studied less than unbiased and $(1:b)$ games, they are also significant. Just a slight change in the bias from $a=1$ to $a=2$ can completely change the outcome (and thus the course of the play) of some games (see~\cite{Beckbook}). One example is the \textit{diameter-2} game (where the winning sets are all graphs with diameter at most 2). It was proved in~\cite{BMP} that Breaker wins the $(1:1)$ \textit{diameter-2} game, but Maker wins the $(2:2)$ \textit{diameter-2} game.

Not so much is known about fast winning strategies in fair $(a:a)$ Maker-Breaker games, where $a$ can be greater than $1$. From the results in~\cite{GeSa,HMS12}, we obtain that in the connectivity game $\tau_{\T_n}(a:a)=\lceil(n-1)/a\rceil$.

Our research is concentrated on fast winning strategies in four $(a:a)$ Maker-Breaker games, for $a\in \N$.
Firstly, we take a look at the $(a:a)$ perfect matching game, $\pm$. The case $a=1$ is already proved in~\cite{HKSS09}, and we show the following theorem for all $a\geq 2$.
\begin{theorem}\label{WeakPM}
Let $a\in\N.$ Then for every large enough $n$
the following is true for the $(a:a)$ Maker-Breaker perfect matching game:
\begin{align*}
\tau_{\pm}(a:a)=
\begin{cases}
 \frac{n}{2a}  + 1, 
		& \text{ if  $a= 1$ and $n$ is even,}\\[0.2cm]
\left\lceil \frac{n}{2a} \right\rceil - 1, 
		& \text{ if  $2a \mid n-1$}\\[0.2cm]
\left\lceil \frac{n}{2a} \right\rceil, 
		& \text{ otherwise.}
\end{cases}
\end{align*}
\end{theorem}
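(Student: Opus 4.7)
The plan is to prove matching lower and upper bounds on $\tau_{\pm}(a:a)$. Since the case $a=1$ is covered by~\cite{HKSS09}, the substance is the case $a\geq 2$, and a short arithmetic check shows that for $a\geq 2$ the theorem's two values both coincide with $\lceil \lfloor n/2\rfloor / a\rceil$. The lower bound is then immediate from the fact that a perfect matching contains $\lfloor n/2\rfloor$ edges while Maker claims only $a$ per round, combined with a case analysis on $n\bmod 2a$ to match the stated expressions.

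For the upper bound I would exhibit a deterministic Maker strategy that maintains a growing matching $M$ and extends it by $a$ edges per round (and by the residue $\lfloor n/2\rfloor-(t-1)a$ in the last round when $a\nmid\lfloor n/2\rfloor$). Let $U$ be the set of vertices not yet covered by $M$ and $B$ the current Breaker graph; the whole argument reduces to the claim that at the start of each round, the Breaker-free subgraph induced on $U$ contains enough pairwise disjoint edges. For every round but the last this is straightforward, because $|U|\gg 2a$ while Breaker has claimed only $ta$ edges, and a greedy matching argument (pick a vertex of $U$ of minimum Breaker-degree inside $U$, match it with any free partner, iterate) produces the $a$ disjoint edges Maker needs.

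The main obstacle is the final round. There $|U|$ has shrunk to about $2a$ (or $2a\pm 1$), while Breaker is in principle free to have concentrated many of her $\Theta(n)$ edges on $U$, potentially blocking a single vertex so thoroughly that Maker cannot complete the matching. To control this I would augment the greedy rule with a \emph{danger-prefer} choice: whenever several matching extensions are available, Maker prefers the one that removes from $U$ the vertices currently carrying the highest Breaker-degree inside $U$. Since Breaker can add only $a$ edges inside $U$ per round whereas Maker removes $2a$ (often high-degree) vertices, a simple monovariant argument controls the maximum Breaker-degree inside $U$ throughout the game; at the start of the final round this degree should drop to at most $a-1$, which makes the Breaker-free subgraph of $K_n[U]$ have minimum degree at least $\lceil|U|/2\rceil$ and hence, by a Dirac-type theorem, a Hamilton cycle containing the required (near-)perfect matching. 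Verifying this degree invariant against every possible Breaker strategy is the only technical step that needs careful attention; the branch $2a\mid n-1$, where the last round leaves one vertex deliberately unmatched among $2a+1$ uncovered vertices, is handled by exactly the same danger rule applied to the odd-order case of the Dirac-type argument.
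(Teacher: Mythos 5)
Your overall shape (greedy matching maintained round by round, a rule that fights Breaker's edges among the uncovered vertices $U$, plus the trivial edge-count lower bound and the arithmetic observation that for $a\geq 2$ all cases reduce to $\lceil\lfloor n/2\rfloor/a\rceil$) is the same as the paper's, but the endgame as you describe it has a genuine gap, and it is exactly the point where the real work lies. A strategy whose last round only looks for free edges \emph{inside} $U$ cannot work in the small-residue case: take $n$ even with $n/2\equiv 1 \pmod a$, so after $\lceil n/(2a)\rceil-1$ full rounds Maker has $n/2-1$ matching edges and $U=\{v,w\}$; Breaker moves immediately before Maker's last move and simply claims $vw$, and no "enough pairwise disjoint free edges in $U$" statement can save you. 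The paper's strategy finishes instead by \emph{augmenting through an already claimed matching edge}: it finds $xy$ in Maker's matching with $vx$ and $wy$ both free and claims those two edges (discarding $xy$ from the matching). Making this work requires two ingredients absent from your proposal: (i) a budget analysis in the last round (augmentation costs $2$ edges per leftover pair, so one distinguishes $t\leq a/2$, where all pairs are augmented, from $a/2<t\leq a-1$, where a $(t-1)$-matching inside $U$ is combined with a single augmentation, and $t=a$, where a full internal matching is unavoidable); and (ii) a \emph{global} degree bound on the leftover vertices (the paper's property (P2), maintained by spending one edge per round on the uncovered vertex of maximum Breaker degree in the whole graph), which is what guarantees the augmenting edge $xy$ exists. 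Your danger rule only controls Breaker degrees \emph{inside} $U$, so it yields neither ingredient.

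Separately, the invariant you plan to prove -- Breaker degree inside $U$ at most $a-1$ at the start of the final round, hence a Dirac-type Hamilton cycle in the free graph on $U$ -- is not enforceable: Breaker moves last before Maker's final move and can place all $a$ of his edges inside $U$ at a single vertex, so the minimum free degree in $U$ can drop to $|U|-1-a<\lceil|U|/2\rceil$ no matter how Maker played earlier. What \emph{can} be enforced, and is what the paper proves (its Claim 3.2, giving property (P1)), is that immediately before Breaker's last move there are no Breaker edges inside $U$, hence at most $a$ afterwards; this requires the quantitative argument that each of Maker's edges can be chosen to delete at least two internal Breaker edges whenever at least two exist. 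Even then, in the tight case $a\mid n/2$ (where $|U|=2a$ and Maker has no spare budget for augmentation) one still needs an argument other than Dirac to extract a perfect matching of the free graph on $U$: the paper chooses a bipartition of $U$ into classes of size $a$ with fewer than $a$ Breaker edges across it and applies K\"onig--Egerv\'ary. Finally, the branch $2a\mid n-1$ is handled in the paper simply by playing on $K_{n-1}\subseteq K_n$, which is cleaner than an odd-order Dirac variant. So the lower bound and the general direction of your strategy are fine, but the final-round analysis needs to be replaced by the augmentation-plus-(P1)/(P2) machinery described above.
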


\medskip

Secondly, we analyse the $(a:a)$ Maker-Breaker Hamilton cycle game, $\ham$, and prove the following result for $a\geq 2$. The case $a=1$ is proved in~\cite{HS09}. 

\begin{theorem}\label{WeakHam}
Let $a\in\N.$ Then for every large enough $n$
the following is true for the $(a:a)$ Maker-Breaker Hamilton cycle game:
\begin{align*}
\tau_{\ham}(a:a)=
\begin{cases}
 \frac{n}{a}  + 1, 	
		& \text{ if } a=1 \text { or } (a=2 \text { and } n \text{ is even}),\\[0.2cm]
\left\lceil \frac{n}{a} \right\rceil, 		
		& \text{ otherwise.}
\end{cases}
\end{align*}
\end{theorem}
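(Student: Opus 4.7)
The plan is to establish matching lower and upper bounds on $\tau_{\ham}(a:a)$.

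For the lower bound, the basic counting bound $\tau_{\ham}(a:a) \geq \lceil n/a \rceil$ is immediate, since a Hamilton cycle has $n$ edges and Maker claims exactly $a$ edges per move. The extra $+1$ in the exceptional cases is needed precisely because these are the cases with $a \in \{1,2\}$ and $a \mid n$: if Maker were to finish at turn $n/a$, her $n$ claimed edges would have to coincide with the edges of a Hamilton cycle, and in particular every vertex would have Maker-degree exactly $2$. The case $a=1$ is due to Hefetz and Stich. For $a=2$ with $n$ even I would use an analogous degree-forcing strategy: at each turn Breaker plays his two edges at vertices of smallest current Maker-degree, and a potential-function count shows that at Maker's $(n/2)$-th turn some vertex must still have Maker-degree at most one, so no Hamilton cycle can be present.

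For the upper bound I would exhibit a Maker strategy of the stated length, organized in three phases. In a short opening phase Maker spends $O(1)$ moves building a collection of disjoint paths covering all vertices while enforcing a boundedness condition on Breaker's maximum degree. In a long path-merging phase, which takes up the bulk of the game, Maker joins these path fragments into a single Hamilton path, using on average one edge per vertex; of her $a$ edges per move, one is the main ``merge'' edge and the other $a-1$ are reserved for the closing phase or used to pre-empt Breaker's current threat at a specific vertex. In a short closing phase, Maker converts the Hamilton path into a Hamilton cycle. In the exceptional cases $a \in \{1,2\}$ with $a \mid n$, the closing phase demands one additional move, which is exactly the $+1$ in the theorem.

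The main obstacle is the closing phase: after Maker holds a Hamilton path, Breaker may already have claimed every edge between its two current endpoints, so the cycle cannot be closed directly. The intended resolution is a P\'osa-style rotation argument, using the reserve edges from the path-merging phase to produce many alternative endpoint pairs for the Hamilton path, and then verifying that at least one such pair is connectable in Maker's graph within the remaining move budget. A secondary difficulty is the tightness of the counting: when $a \nmid n$ the total slack over the whole game is only $O(1)$ edges, so every overhead must be absorbed into the $a-1$ auxiliary edges per move rather than costing additional moves, and this forces the opening and closing phases to have strictly bounded size even though the interaction with Breaker's bias $a$ is most delicate there.
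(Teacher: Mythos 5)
Your lower bound for the exceptional case $a=2$, $n$ even, has a genuine gap. You claim that Breaker, by playing at vertices of smallest Maker-degree, can force some vertex to have Maker-degree at most one after Maker's $(n/2)$-th move, but no such potential count is given and the mechanism cannot be right as stated: for every $a\geq 3$ with $a\mid n$ Maker \emph{does} end up with all degrees exactly $2$ (a full Hamilton cycle) after $n/a$ moves with zero wasted edges, so a degree argument alone cannot separate $a=2$ from $a\geq 3$, and it would have to exploit $a=2$ in a way you never specify. The actual obstruction, and the paper's argument, is structural and concerns only the last round: if Maker could win in $n/2$ rounds, then after her $(n/2-1)$-st move her $n-2$ edges must form exactly two vertex-disjoint paths covering all vertices, and the only ways to finish are the two pairs of endpoint-connecting edges; Breaker, who moves before her, claims the at most two edges joining one endpoint $x$ of the first path to the endpoints of the second path, killing both completions and forcing an extra round.

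The upper bound also does not add up as budgeted. Maker claims $a\lceil n/a\rceil$ edges in total while a Hamilton cycle needs $n$, so for $a\geq 3$ with $a\mid n$ she has \emph{zero} slack: every edge she ever claims must lie on the final cycle. Your scheme of ``one main merge edge per move, the other $a-1$ reserved for the closing phase or used to pre-empt threats'' is therefore untenable: with one productive edge per move the merging alone takes about $n$ rounds, edges cannot be ``reserved'', and any pre-empting edge off the final cycle is waste that is unaffordable when $a\mid n$ (you even place the tightness in the case $a\nmid n$, which is backwards -- that is the case with a few spare edges). Likewise an $O(1)$-move opening cannot produce nontrivial paths covering all $n$ vertices. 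The paper instead spends roughly $n/(2a)$ rounds building a perfect matching (every edge productive), then uses \emph{all} $a$ edges of each subsequent round to merge paths, and the crux you leave unaddressed is why the endpoints stay connectable: a potential argument on ``bad'' edges (Breaker edges between endpoints of distinct Maker paths), in which Maker always claims a free endpoint--endpoint edge adjacent to a maximal number of bad edges, plus a short interlude bounding Breaker's degree at endpoints, guarantees that when $p\leq a$ paths remain the bad edges number less than $2p-2$; hence in the zero-slack case she closes the cycle by pure merging in one round, and P\'osa rotations (which discard Maker edges and thus cost slack) are invoked only when $a\nmid n$ or in the $(2:2)$, $n$ even case where the extra round exists.
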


\medskip

We study two more $(a:a)$ Maker-Breaker games whose winning sets are spanning graphs. More precisely, we are interested in factoring the graph $K_n$ with stars and paths. For fixed $k\geq 2$, let $P_k$ denote a path with $k$ vertices, and let $S_k$ denote a star with $k-1$ leaves. Now, for all large enough $n$, such that $k \mid n$, we are interested in finding winning strategies in the $(a:a)$ $P_k$-factor game, denoted by $\pkf$, and in the $(a:a)$ $S_k$-factor game, denoted by $\skf$, where the winning sets are all path factors and star factors of $K_n$, respectively, on $k$ vertices. We show the following.
\begin{theorem}
\label{weakPk}
Let $a\in\N$ and $k\in\N.$ Then for every large enough $n$, such that $k \mid n$, the following is true for the $(a:a)$ Maker-Breaker $P_k$-factor game:
\begin{align*}
\tau_{\pkf}(a:a)= \left\lceil \frac{(k-1)n}{ka} \right\rceil . 	
\end{align*}
\end{theorem}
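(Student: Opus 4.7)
The lower bound is immediate: any $P_k$-factor of $K_n$ has exactly $(k-1)n/k$ edges, and Maker claims $a$ edges per move, so she needs at least $\lceil (k-1)n/(ka) \rceil$ moves. The task is therefore to give a matching upper bound.

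The plan is that Maker maintains at all times a linear forest $F$ consisting of her claimed edges, under the invariant that every component of $F$ has at most $k$ vertices. Call a vertex \emph{active} if it is isolated in $F$ or an endpoint of a component of $F$ of size strictly less than $k$. In every move Maker chooses $a$ unclaimed edges, each joining active vertices from two distinct components of $F$ whose combined size is at most $k$; call such an edge \emph{admissible}. Each admissible edge becomes an edge of the final factor, so after claiming $(k-1)n/k$ admissible edges Maker has built a complete $P_k$-factor. The inequality $\lceil (k-1)n/(ka) \rceil \cdot a \leq (k-1)n/k + a - 1$ shows that up to $a-1$ unnecessary edges may be claimed in the last round, but this is exactly the slack provided by the ceiling.

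That admissible edges are available throughout the bulk of the game is a routine counting argument: whenever $\Omega(n)$ vertices are active, there are $\Omega(n^2)$ admissible candidates, while Breaker has claimed only $O(n)$ edges in total, so Maker has abundant choice. To keep this condition valid, Maker balances the sizes of her components, preferring to extend smaller components or to connect to an isolated vertex, rather than finishing a component early.

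The main obstacle I expect is the endgame, when only $O(1)$ active components remain: then the pool of admissible edges shrinks to $O(1)$, and Breaker may have been quietly targeting precisely those edges. I would handle this by having Maker reserve $\Theta(1)$ isolated vertices until the very last rounds, so that when she closes off the final components she still has many admissible pairs of the form (isolated vertex)--(endpoint of a near-complete path) to choose from, more than Breaker could have blocked. A short case analysis depending on the residue of $(k-1)n \pmod{ka}$ then pins down the exact count $\lceil (k-1)n/(ka) \rceil$, distinguishing whether the last round is fully devoted to admissible edges or contains a bounded surplus.
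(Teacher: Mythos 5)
Your lower bound is fine and is the same trivial counting bound the paper uses, and your general plan (grow a balanced system of vertex-disjoint paths, each on at most $k$ vertices, finishing all of them in the last round or two) is broadly the same as the paper's. But the endgame, which you yourself identify as the main obstacle, is exactly where the real content of the proof lies, and your proposed fix does not close it. Reserving $\Theta(1)$ isolated vertices does not give Maker ``more admissible pairs than Breaker could have blocked'': at the end the useful edges form a set of size $O(a^2)$, while Breaker claims $\Theta(n)$ edges during the game and moves last with bias $a$, and nothing in your strategy (which balances component \emph{sizes} only) prevents him from investing in this core long in advance -- for instance by saturating the edges from one fixed vertex $u$ to the endpoints of paths as soon as they reach $k-1$ vertices, so that if $u$ is still isolated near the end it cannot be completed. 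The paper defends against precisely this with Breaker-degree absorption rules (rule R1 in Stage I and rule (b) in Stage II: immediately incorporate a vertex whose Breaker degree toward the relevant sets becomes large) together with explicit invariants ((Q1)--(Q3), (F1) and the potential $\varphi$, plus the condition (c) on the last edge of Stage II) whose whole purpose is to guarantee that at the final move the number of Breaker edges between the remaining isolated vertices and the endpoints of the remaining incomplete paths is only about $a$, not merely $O(n)$. Your ``routine counting'' argument only works while $\Omega(n)$ vertices are active and gives no such quantitative control.

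Moreover, even with that control, direct one-edge completions can be impossible: if a single isolated vertex $u$ and a single path $P$ on $k-1$ vertices remain, there are only two useful direct edges ($u$ to either endpoint of $P$), and Breaker's immediately preceding move (bias $a\ge 2$) can claim both. The paper needs an additional idea that your sketch lacks entirely: reroute through an already completed path $R$ on $k$ vertices by claiming the two edges $uv_1^{R}$ and $v_2^{R}v_1^{P}$, so that $V(R)\cup V(P)\cup\{u\}$ spans a path on $2k$ vertices containing two disjoint copies of $P_k$; this is available because there are $\Omega(n)$ complete paths and all relevant Breaker degrees are kept below $2\delta n$. Fitting these two-edge repairs into a single final move of $a$ edges is what forces the paper's case distinction $|U|\le a/2$ versus $a/2<|U|\le a$, the latter resolved by a K\"onig--Egerv\'ary matching argument in an auxiliary bipartite graph; the case analysis you propose (on the residue of $(k-1)n$ modulo $ka$) is not the relevant one. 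As written, your argument would not survive a Breaker who plays against the final core, so the proof has a genuine gap at its critical step.
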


\medskip

\medskip

\begin{theorem}
\label{weakSk}
Let $a\geq 1$ and $k\geq 3$ be integers. Then for every large enough $n$, such that $k \mid n$, the following is true for the
$(a:a)$ Maker-Breaker $S_k$-factor game:
\begin{align*}
\tau_{\skf}(a:a)\leq
\begin{cases}
\left\lceil \frac{(k-1)n}{ka} \right\rceil,		
		& \text{ if } a\nmid \frac{(k-1)n}{k} ,\\[0.2cm]
 \frac{(k-1)n}{ka}  +1, 		
		& \text{ otherwise.}
\end{cases}
\end{align*}

\end{theorem}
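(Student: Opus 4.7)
\emph{Plan overview.} An $S_k$-factor of $K_n$ contains $\tfrac{(k-1)n}{k}$ edges, so Maker needs at least $\lceil(k-1)n/(ka)\rceil$ moves just to own enough edges. I would obtain the matching upper bound (with a single extra move in the divisibility case) by exhibiting an explicit strategy and checking that it never gets stuck for large $n$.

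\emph{The strategy.} Maker constructs her factor one star at a time. She maintains an ``active'' partial star with some center $c$ to which she has already attached $\ell<k-1$ leaves from a set $L$, and an auxiliary set $C$ of centers chosen so far. In a given move she first claims $\min(a,\,k-1-\ell)$ edges from $c$ to vertices in $V\setminus(C\cup L)$, completing or extending the active star; if the star becomes complete mid-move, she immediately designates a new center $c'\in V\setminus(C\cup L)$ and spends the remaining edges of the move on leaves of $c'$. In this way every edge of Maker is useful, except possibly for at most $a-1$ trailing edges in the very last move, which are absorbed by the rounding in $\lceil(k-1)n/(ka)\rceil$ whenever $a\nmid(k-1)n/k$.

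\emph{Why the choices stay available.} For the strategy to proceed, Maker needs (i) a new center $c'\in U:=V\setminus(C\cup L)$ whose Breaker-degree inside $U$ is small, and (ii) for the active center $c$, enough vertices $v\in U$ with the edge $cv$ not owned by Breaker. The total number of Breaker edges up to any given point is at most $a\lceil(k-1)n/(ka)\rceil+a=(k-1)n/k+O(a)$, so an averaging argument in $U$ produces a vertex of Breaker-degree at most $2|E_B(U)|/|U|$ in $U$; as long as $|U|$ is of linear order in $n$ this quantity is $o(|U|)$, far below $|U|-1$, so such a vertex is a safe new center with plenty of valid leaves. Consequently the strategy runs smoothly until only a few stars remain.

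\emph{The endgame and the $+1$.} The delicate part is completing the last star: when $|U|=k$, already a single Breaker edge inside $U$ may obstruct Maker, since she needs one vertex of $U$ with Breaker-degree $0$ in $U$. To fix this I would have Maker reserve at the start a modest pool $U^\ast\subseteq V$ of vertices that she avoids as both centers and leaves until the end; using that $|E_B(U^\ast)|$ is bounded by Breaker's total budget, one argues that some $k$-subset of $U^\ast$ still spans no Breaker edge and can host the final star. In the non-divisibility case the last Maker move has leftover edge capacity that absorbs the cost of relocating leaves into the reservoir, so $\lceil(k-1)n/(ka)\rceil$ moves suffice. In the divisibility case there is no such slack, and a single additional move must be spent to replace an edge Breaker forced Maker to abandon (equivalently, to ``clean'' the reservoir), which yields the stated $+1$. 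The main technical hurdle is precisely this endgame bookkeeping: verifying that the reservoir remains usable on Breaker's budget, and that exactly one extra move is always enough when $a\mid(k-1)n/k$.
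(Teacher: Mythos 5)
Your mid-game averaging is fine, but the endgame is where the proof actually lives, and the reservoir device you propose does not survive scrutiny. If the reservoir $U^\ast$ is of "modest" size (anything up to order $\sqrt{n}$), Breaker's linear budget lets him claim essentially all of $K[U^\ast]$, or more cheaply, all $|U^\ast|-1$ edges from a single reserved vertex $v$ into $U^\ast$ (only $O(|U^\ast|)$ edges). Since your strategy shuns $U^\ast$ as centers and leaves until the end, the vertices left uncovered at the end are exactly those of $U^\ast$, so every final star must lie entirely inside $U^\ast$; an isolated-in-$U^\ast$ vertex $v$ then cannot be placed in any star, and finding one clean $k$-subset of $U^\ast$ does not help, because \emph{all} of $U^\ast$, not just $k$ vertices of it, must be partitioned into stars. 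Taking $|U^\ast|$ large enough to defeat this (say $\gg\sqrt{n}$) just reproduces the original problem on $U^\ast$ against a Breaker who may now own a constant fraction of all pairs inside it. Reacting adaptively (using a threatened reserved vertex as a leaf early) abandons the stated strategy and its move count, so as written the endgame, and with it the source of the $+1$ in the divisibility case, is a genuine gap rather than mere bookkeeping.

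The paper avoids exactly this trap by never letting the set of unfinished vertices become small: Maker grows all $n/k$ stars in parallel, keeping their sizes within one of each other, and controls (by a constant bound) the number of Breaker edges between current centers and the still-isolated vertices. After $\lceil (k-2)n/(ak)\rceil$ rounds every star has $k-2$ or $k-1$ leaves, and what remains is to claim a perfect matching between the roughly $n/k$ deficient centers and the roughly $n/k$ isolated vertices, i.e.\ a fast perfect matching game on an almost complete bipartite graph with linearly large sides and only constantly many Breaker edges; Proposition~\ref{WeakPMstronger} finishes this in $\lfloor N/a\rfloor+1$ rounds, and that final $+1$ is precisely where the extra round in the case $a\mid (k-1)n/k$ comes from. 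If you want to salvage a sequential construction, you would need a dynamic mechanism that keeps every yet-uncovered vertex supplied with many free edges into the yet-uncovered set throughout the game, which is in effect the balanced-growth bookkeeping the paper performs.
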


\textbf{Strong games.} We also look at another type of positional games. In the \textit{strong} positional game $(X,\F)$, the two players are called \textit{Red} and \textit{Blue}, and Red starts the game. The winner of the game is the \textit{first} player who claims all the elements of one $F \in \F$. If none of the players manage to do that before all the elements of $X$ are claimed, the game ends in a \textit{draw}. 

By the \textit{strategy stealing argument} (see~\cite{Beckbook}), Blue cannot have a winning strategy in the strong game. So, in every strong game, either Red wins, or Blue has a drawing strategy. For the games where the draw is impossible, we know that Red wins. Unfortunately, the existence of Red's strategy tells us nothing about how Red should play in order to win. Finding explicit winning strategies for Red can be very difficult. The results in~\cite{FH,FHcon} show that fast winning strategies for Maker in certain games can be used in order to describe the winning strategies for Red in the strong version of these games. 

If Maker can win \textit{perfectly fast} in the $(a:a)$ game $\F$, i.e.\ if the number of moves, $t$, she needs to win is equal to $\lceil \min(|F|:F\in\F)/a \rceil $, that immediately implies Red's win in the strong game. Indeed, as Red starts the game, Blue has no chance to fully claim any winning set in less than $t$ moves. Thus, Red can play according to the strategy of Maker, without worrying about Blue's moves, by which Red will claim a winning set in $t$ moves, thus winning the game. 

From Theorem~\ref{WeakPM}, we see that Maker can win perfectly fast in the $(a:a)$ perfect matching game 
in all cases, but in case $a=1$. Therefore, we immediately see that for $a\neq 1$, Red has a winning strategy for the corresponding strong game. For $a=1$ the proof that Red wins the strong game appears in \cite{FH}. 
Similarly to the perfect matching game, from Theorem~\ref{WeakHam} we can immediately see that Red has a winning strategy for the strong $(a:a)$ Hamilton cycle game in all but two cases -- the case $a=1$ and the case $a=2$ and $n$ is even. The case $a=1$ appears in \cite{FH}, and for the remaining case, we prove the following theorem.

\begin{theorem}\label{StrongHam}
For every large enough even $n$ the following is true:
Red has a strategy for the $(2:2)$ Hamilton cycle game to win within 
$n/2+1$ rounds. 
\end{theorem}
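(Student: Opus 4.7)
The plan is for Red to play the fast Maker strategy $\mathcal{S}_M$ guaranteed by Theorem~\ref{WeakHam}, which completes a Hamilton cycle in Red's $n/2+1$-th move, modified in Red's $n/2$-th move to prevent Blue from winning earlier. Since Blue claims $2k$ edges in $k$ moves and a Hamilton cycle has $n$ edges, Blue can beat Red's $n/2+1$-round bound only by winning in his $n/2$-th move, in which case Blue's $n$ claimed edges must form \emph{exactly} a Hamilton cycle with no slack. This forces Blue's intermediate graph to be a subgraph of some Hamilton cycle at every stage, i.e., a linear forest; after Blue's $(n/2-1)$-th move it must in particular span $V(K_n)$ and have exactly two components. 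If Blue's graph at that point lacks this structure, Blue cannot finish in his $n/2$-th move and Red wins by following $\mathcal{S}_M$ unchanged.

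Now suppose Blue's graph after his $(n/2-1)$-th move is a spanning linear forest with two components. If both are nontrivial paths, with endpoints $\{a,b\}$ and $\{c,d\}$, then the only Hamilton cycles extending this forest add the edge pair $\{ac,bd\}$ or $\{ad,bc\}$, and the two edges $ac,ad$ (both incident to $a$) block both options simultaneously. If instead one component is a path with endpoints $a,b$ and the other is an isolated vertex $v$, then the only closing pair is $\{va,vb\}$ and a single edge from it suffices to block Blue. All candidate blocking edges go between the two components of Blue's forest, so they are absent from Blue's graph and at most a constant number can already be claimed by Red; in either case two suitable blocking edges remain available for Red's $n/2$-th move. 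The modification is then: in round $n/2$, played before Blue's move, if Blue exhibits the critical spanning two-component structure Red claims two such blocking edges; otherwise Red plays $\mathcal{S}_M$ unchanged. This ensures Blue cannot close a Hamilton cycle in his $n/2$-th move.

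The main obstacle is to verify that Red's modified $n/2$-th move does not prevent Red from completing his Hamilton cycle in move $n/2+1$. The leverage is that $\mathcal{S}_M$ claims $n+2$ edges in total but only needs $n$ for a Hamilton cycle, leaving a budget of two ``slack'' edges to reassign. Compatibility can be established by re-examining the proof of Theorem~\ref{WeakHam}: after $n/2-1$ moves the Maker strategy typically has built a near-spanning path or matching with many endpoints, and its penultimate move has considerable freedom in which ``connector'' edges to add. Showing that the two blocking edges fall within this flexible family, possibly after a small rearrangement of $\mathcal{S}_M$ in earlier rounds so that the blocking edges coincide with admissible connector edges, is the technical heart of the argument, and follows the spirit of the framework of \cite{FH} for converting near-optimal Maker strategies into strong-game winning strategies.
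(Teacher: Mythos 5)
Your reduction of the danger to Blue's graph being, after his $(n/2-1)$-st move, a spanning linear forest with exactly two components, and your observation about which edge pairs close such a forest, are both correct and agree with the paper. But the step you yourself call ``the technical heart'' -- that Red's blocking move in round $n/2$ is compatible with finishing her own Hamilton cycle in round $n/2+1$ -- is exactly where the proof lives, and your sketch of it does not work as stated. In the sub-case where Blue's forest consists of two nontrivial paths, your plan spends \emph{both} of Red's edges in round $n/2$ on blocking (the edges $ac,ad$). At that point Red's own graph, built by $\mathcal{S}_M$, is still a union of two paths, and in his $n/2$-th move Blue is free to claim both connector edges at one endpoint of Red's paths; then the two remaining closing pairs are each broken, and Red cannot complete a Hamilton cycle with the two edges of round $n/2+1$ (a rotation repair via Proposition~\ref{rotation} and Corollary~\ref{Hamiltonian} needs up to four edges from two paths). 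This is precisely the argument that gives the lower bound $\tau_{\ham}(2:2)\geq n/2+1$ in Theorem~\ref{WeakHam}, so the ``two slack edges'' you invoke are not genuinely spare: they are the budget Red needs for rotations against exactly this kind of interference, and a vague ``rearrangement of $\mathcal{S}_M$ in earlier rounds'' is not a proof that the blocking edges can be made to double as connector edges.

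The paper closes this gap by intervening one round earlier, not in round $n/2$. In round $n/2-1$ Red chooses her two edges so that her graph becomes two paths with \emph{no} bad edge among their endpoints (property (S1)) and so that the number of Blue-isolated vertices among her path endpoints decreases when possible (property (S2)). With these properties one shows that in the dangerous case either two free connector edges already close Red's Hamilton cycle, so she simply wins in round $n/2$ before Blue can move, or else the bookkeeping of Blue's isolated vertices forces Blue's graph to be a path on $n-1$ vertices plus a single isolated vertex $z$; there a \emph{single} edge from $z$ to an endpoint of Blue's path blocks him, Red's other edge in round $n/2$ merges her two paths into a Hamilton path, and the maintained degree bound (H2) of Claim~\ref{WeakHam:claim2} lets her close the cycle in round $n/2+1$ via Corollary~\ref{Hamiltonian}. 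In particular, the case you treat as needing two blocking edges is shown never to force two blocking edges at all -- but establishing that requires the (S1)/(S2) modification of round $n/2-1$, which your proposal does not contain. As written, the argument has a genuine gap at its central step.
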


Every $P_k$-factor of $K_n$, for given $k\in \N$ such that $k \mid n$, has to have $n(k-1)/k$ edges. Therefore, from Theorem~\ref{weakPk}, we obtain that Maker can win perfectly fast in $(a:a)$ game $\pkf$ and Red can use the winning strategy of Maker in this game to win in the corresponding strong game.\\

{\bf Notation and terminology.}
Our graph-theoretic notation is standard and mostly follows that of~\cite{West}. 
In par\-ti\-cu\-lar, we use the following. We write $[n]:=\{1,2,\ldots,n\}$. 
For a graph $G$, $V(G)$ and $E(G)$ denote its sets of vertices and edges respectively, $v(G) = |V(G)|$ and $e(G) = |E(G)|$. For disjoint sets $A,B \subseteq V(G)$, let $E_G(A,B)$ denote the set of edges of $G$ with one endpoint in $A$ and one endpoint in $B$, and $e_G(A,B):=|E_G(A,B)|$. Moreover, $E_G(A):=E_G(A,A)$ and $e_G(A):=|E_G(A)|$. For a vertex $x \in V(G)$ and a set $S\subseteq V(G)$, $N_G(x,S) = \{u \in
S : ux \in E(G)\}$ denotes the set of {\em neighbours} of the vertex $x$ in the set $S$
with respect to (w.r.t.) $G$. We set $N_G(x):=N_G(x,V(G))$. Moreover, $d_G(x,S):=|N_G(x,S)|$
denotes the {\em degree} of $x$ into $S$, while $d_G(x) = |N_G(x)|$ denotes 
the {\em degree} of $x$ in the graph $G$.
Whenever there is no risk of confusion, we omit the subscript $G$ in the notation above.
Given a graph $G=(V,E)$, we let $\overline{G}=(V,\overline{E})$ denote its {\em complement},
where $\overline{E}:=\{xy\notin E:\ x,y\in V\}$. For $e\in E(G)$, we set $G-e:=(V,E\setminus \{e\})$.
For $S\subseteq V$, $G[S]$ denotes the subgraph of $G$ {\em induced} by $S$, i.e. $G[S]=(S,E_S)$
where $E_S:=\{xy\in E(G): x,y\in S\}$. Given another graph $H$ on the same vertex set as $G$,
we let $G-H:=(V,E(G)\setminus E(H))$. Given a path $P$ in a graph $G$, we let $\End(P)$ denote the set of its endpoints. For every family $\P$ of disjoint paths, we set $\End(\P):=\bigcup_{P\in\P} \End(P)$ and $E(\P):=\bigcup_{P\in\P} E(P)$. With $P_n,S_n,K_n$ and $K_{n,n}$ we denote the path on $n$ vertices,
the star on $n$ vertices (and $n-1$ edges), the complete graph on $n$ vertices
and the complete bipartite graph with vertex classes of size $n$ each, respectively. 
A cycle in a graph $G$ is called {\em Hamilton cycle} if it passes through every vertex of $G$;
in case such a cycle exists, $G$ is called {\em Hamiltonian}. A set of pairwise disjoint edges
in a graph $G$ is called a {\em matching}, and we call it a {\em perfect matching} if it covers
every vertex (but at most one, in case $v(G)$ is odd).\\

Assume an $(a:a)$ Maker-Breaker game is in progress. By $M$ we denote Maker's graph and by $B$
we denote Breaker's graph. If an edge is unclaimed by any of the players we call it \textit{free}.  
Each {\em round} (but maybe the very last one) consists of exactly one {\em move} of Breaker followed by a move of Maker, where the player claims $a$ edges. Claiming only one of these edges is called a {\em step} in the game. \\ 

The rest of the paper is organized as follows. In Section~\ref{basic} we start with some preliminaries. In Section~\ref{wPM}, we prove Theorem~\ref{WeakPM}, in Section~\ref{wHCG} we prove Theorem~\ref{WeakHam}, in Section~\ref{sHCG} we prove Theorem~\ref{StrongHam} and in Section~\ref{wPkf} we prove Theorem~\ref{weakPk}. Finally, in Section~\ref{wSkf}, we prove Theorem~\ref{weakSk}.
\section{Preliminaries}\label{basic}

In the strategy for the $(a:a)$ Hamilton cycle game, given later in Section~\ref{wHCG},
Maker tries to maintain
a linear forest (a collection of vertex disjoint paths) as long as possible. 
We show that she can do so for the first $\lceil n/a \rceil -1$ rounds. 
Within the following (at most) two rounds, 
motivated by the rotation techniques of Pos\'a (see e.g.\ \cite{Posa}), she then adds further
edges to her graph and obtains a Hamilton cycle.
In order to guarantee that this is possible,
we prove the following statements.

\begin{proposition}\label{rotation}
Let $P_1$, $P_2$ be vertex disjoint paths in some graph $G,$
and $d_{\overline{G}}(v)\leq v(P_1)/2 - 1$ for every $v\in \bigcup_{i=1}^2 \End(P_i).$
Then there exists 
an edge $f\in E(P_1)$ and two edges $e_1,e_2\in E(G)$
such that $((E(P_1)\cup E(P_2))\setminus \{f\})\cup \{e_1,e_2\}$
induces a path $P$ of $G$ with 
$V(P)=V(P_1)\cup V(P_2)$ and $\End(P)\subseteq \End(P_1)\cup \End(P_2).$
\end{proposition}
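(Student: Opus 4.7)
The plan is to reduce the statement to a short counting lemma on the vertices of $P_1$. Write $P_1 = w_1 w_2 \cdots w_p$ with $p := v(P_1)$, and let $x_2, y_2$ denote the endpoints of $P_2$ (with $x_2 = y_2$ allowed if $v(P_2)=1$). The strategy is to find an index $i \in [p-1]$ so that, after removing $f := w_i w_{i+1}$ from $P_1$ and reinserting $e_1 := x_2 w_i$ and $e_2 := y_2 w_{i+1}$ (or the analogous swap $y_2 w_i,\ x_2 w_{i+1}$), the edges assemble into the single path
\[
w_1 \cdots w_i \,-\, x_2 \,-\, P_2 \,-\, y_2 \,-\, w_{i+1} \cdots w_p,
\]
whose endpoints $\{w_1,w_p\} = \End(P_1)$ lie in $\End(P_1) \cup \End(P_2)$, as required.

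Introduce $X := \{i \in [p] : x_2 w_i \in E(G)\}$ and $Y := \{i \in [p] : y_2 w_i \in E(G)\}$; since $x_2, y_2 \notin V(P_1)$, the hypothesis $d_{\overline{G}}(x_2), d_{\overline{G}}(y_2) \leq p/2 - 1$ immediately yields $|X|, |Y| \geq p - (p/2 - 1) = p/2 + 1$. Finding an admissible $i$ therefore reduces to locating $i \in [p-1]$ with either $(i \in X,\ i{+}1 \in Y)$ or $(i \in Y,\ i{+}1 \in X)$.

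The main work, and the step I expect to be the main obstacle, is a short counting argument ruling out the negation of this. Partition $[p]$ into $A := X \setminus Y$, $B := Y \setminus X$, $C := X \cap Y$, $D := [p] \setminus (X \cup Y)$. If no admissible $i$ existed, then membership in $X$ would forbid the successor from being in $Y$ and vice versa; in particular, every $i \in C$ with $i \leq p-1$ would be forced to satisfy $i{+}1 \in D$. The successor map therefore injects $C \cap [p-1]$ into $D$, giving $|D| \geq |C| - 1$. On the other hand, $|X|+|Y| = |A|+|B|+2|C| \geq p+2$ together with $|A|+|B|+|C|+|D| = p$ forces $|C| \geq |D|+2$, contradicting the previous inequality.

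Hence an admissible $i$ exists, and the corresponding choice of $f, e_1, e_2$ furnishes the desired path. The minor edge cases — $i \in \{1, p{-}1\}$ (in which one of the sub-paths of $P_1 - f$ degenerates to a single vertex) and $v(P_2) = 1$ (so $x_2 = y_2$ and $X = Y$) — cause no trouble: the concatenation above remains a genuine path with endpoints in $\End(P_1)$, and the counting step carries over without change.
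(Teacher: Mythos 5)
Your proof is correct, but it takes a genuinely different route from the paper's. The paper performs a P\'osa-type rotation anchored at an endpoint of $P_1$: with $S_1=N_G(x_1)\cap V(P_1)$ and $S_2=N_G(x_2)\cap V(P_1)$, the degree hypothesis gives $|S_1^-|+|S_2|>v(P_1)$, so some $z\in S_1^-\cap S_2$ exists; deleting $zz^+$ and adding $x_1z^+$ and $x_2z$ produces a path whose endpoints are one old endpoint of $P_1$ and one of $P_2$. You instead splice $P_2$ into the interior of $P_1$: you look for an edge $w_iw_{i+1}$ of $P_1$ whose two ends are adjacent (in $G$) to the two ends of $P_2$ in either order, and your $A,B,C,D$ counting (equivalently, $|X|+|Y|\geq p+2$ versus the injection of $C\cap[p-1]$ into $D$) correctly rules out the alternative; the degenerate cases $i\in\{1,p-1\}$ and $v(P_2)=1$ indeed cause no problem, and when $v(P_1)=1$ the hypothesis is void. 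Two small comparative remarks: your argument only uses the degree condition at $\End(P_2)$, so it proves a slightly stronger statement than the paper's (which uses the condition at one endpoint of each path), and your output path keeps $\End(P_1)$ as its endpoint set, whereas the paper's has one endpoint from each original path; both conclusions satisfy $\End(P)\subseteq\End(P_1)\cup\End(P_2)$ and both feed equally well into the iterative merging in Corollary~\ref{Hamiltonian}.
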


\begin{proof} Let $\End(P_i)=\{x_i,y_i\}.$ 
For a vertex $v\in V(P_1)\setminus \{x_1\}$ we define its left-neighbour $v^-$
to be the unique neighbour of $v$ on the subpath from $x_1$ to $v.$
Similarly, for a vertex $v\in V(P_1)\setminus \{y_1\}$ we define its right-neighbour $v^+$
to be the unique neighbour of $v$ on the subpath from $v$ to $y_1.$
Let $S\subseteq V(P_1)$ be some subset, then we set
$S^-:=\{v^-:\ x_1\neq v\in S\}.$
Now, let $S_1:=N_G(x_1)\cap V(P_1)$ and $S_2:=N_G(x_2)\cap V(P_1).$
Then, by assumption $|S_1^-|\geq v(P_1)-1-d_{\overline{G}}(x_1)\geq v(P_1)/2$
and $|S_2| \geq v(P_1) - d_{\overline{G}}(x_2)> v(P_1)/2.$
Thus, $|S_1^-|+|S_2|>v(P_1)$ and therefore $S_1^-\cap S_2\neq \emptyset.$
Let $z\in S_1^-\cap S_2.$  Then $((E(P_1)\cup E(P_2))\setminus \{zz^+\})\cup \{x_1z^+,x_2z\}$
induces a path $P$ as claimed.
\end{proof}

\begin{corollary}\label{Hamiltonian}
Let $G$ be a graph on $n$ vertices; let $P_1,P_2,\ldots,P_t$
be pairwise disjoint paths in $G$ such that $\bigcup_{i=1}^t V(P_i)=V(G).$
Let $d_{\overline{G}}(v)\leq n/(2t) - 1$ for every $v\in \bigcup_{i=1}^t \End(P_i).$
Then there exists a set $E^*\subseteq E(G)$ of $2t$ edges
such that $\bigcup_{i=1}^t E(P_i) \cup E^*$ contains a Hamilton cycle of $G.$
\end{corollary}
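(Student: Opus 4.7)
The plan is to build a Hamilton cycle in two phases: first use Proposition~\ref{rotation} repeatedly to merge the $t$ given paths into a single Hamilton path (costing $2(t-1)$ edges of $G$), and then convert this Hamilton path into a Hamilton cycle by one further rotation (costing one or two more edges of $G$), for a total of at most $2t$ edges in $E^*$.

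For the merging phase I relabel so that $v(P_1)=\max_i v(P_i)$; then averaging gives $v(P_1)\ge n/t$, so $v(P_1)/2-1\ge n/(2t)-1\ge d_{\overline{G}}(v)$ for every $v\in\End(P_1)\cup\End(P_2)$, exactly the hypothesis needed for Proposition~\ref{rotation}. Applying the proposition to $P_1,P_2$ produces a new path $Q$ spanning $V(P_1)\cup V(P_2)$ with $\End(Q)\subseteq \End(P_1)\cup \End(P_2)$, at the cost of two new edges of $E(G)$. I then iterate: treating $Q$ as the new $P_1$, I merge it with $P_3$, then $P_4$, and so on. At each step $v(Q)$ only increases, and $\End(Q)$ remains inside $\bigcup_i \End(P_i)$, so the endpoint degree bound $d_{\overline{G}}(v)\le n/(2t)-1\le v(Q)/2-1$ continues to hold and Proposition~\ref{rotation} can be applied again. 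After $t-1$ merges $Q$ is a Hamilton path of $G$, and $2(t-1)$ edges of $G$ have been collected into $E^*$.

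For the closing phase, write $Q=v_0v_1\cdots v_{n-1}$ with $x=v_0$, $y=v_{n-1}$. If $xy\in E(G)$ I add this single edge to $E^*$ and obtain a Hamilton cycle with $|E^*|\le 2t-1$. Otherwise I perform one P\'osa-type rotation: I look for an index $i\in\{1,\dots,n-3\}$ with $v_{i+1}\in N_G(x)$ and $v_i\in N_G(y)$, which yields the Hamilton cycle $v_0v_1\cdots v_i v_{n-1}v_{n-2}\cdots v_{i+1}v_0$ upon adding the two edges $xv_{i+1}$ and $yv_i$. Existence of such an $i$ is a one-line pigeonhole computation: since $|N_G(x)|,|N_G(y)|\ge n-n/(2t)$, the sets $\{j-1:\,v_j\in N_G(x),\,j\ge 1\}$ and $\{j:\,v_j\in N_G(y)\}$ both sit in $\{0,1,\dots,n-2\}$ and have total size at least $2n-n/t\ge n$, so they must intersect, and the assumption $xy\notin E(G)$ rules out the boundary element $j=0$. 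This brings $|E^*|$ to at most $2t$.

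The main obstacle I expect is purely bookkeeping in the merging phase: verifying that the endpoints of the accumulating path $Q$ always lie in the original set $\bigcup_i \End(P_i)$ (so the degree hypothesis of Proposition~\ref{rotation} continues to apply), and checking that all edges added during the merges and the final closing are distinct and not already path edges. Both facts follow cleanly from the conclusion $\End(P)\subseteq \End(P_1)\cup \End(P_2)$ of the proposition, since the merge absorbs two endpoints into the interior of the new path and the new edges are incident to those now-internal vertices, while the final closing edges are incident to the two endpoints that were never absorbed.
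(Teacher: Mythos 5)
Your proposal is correct and takes essentially the same route as the paper: relabel so the longest path plays the role of $P_1$, merge the $t$ paths one by one via Proposition~\ref{rotation} using at most $2t-2$ edges, and then close the resulting Hamilton path with at most two further edges by the Bondy/P\'osa crossing-pair pigeonhole, which the paper invokes by reference to Dirac's theorem and you spell out explicitly. No gaps.
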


\begin{proof}
W.l.o.g.\ let $v(P_1)=\max\{v(P_i):\ 1\leq i\leq t\}\geq n/t$ 
and therefore $d_{\overline{G}}(v)\leq v(P_1)/2 - 1$
for every $v\in \bigcup_{i=1}^t \End(P_i).$ 
Then, applying Proposition \ref{rotation} for $P_1$ and $P_2$,
we can find two edges $e_1,e_1'\in E(G)$ such that $E(P_1)\cup E(P_2)\cup \{e_1,e_e'\}$
contains a path $P_{12}$ with $V(P_{12})=V(P_1)\cup V(P_2)$
and $\End(P_{12})\subseteq \End(P_1)\cup \End(P_2).$
Now, set $P_1'=P_{12}, P_2'=P_3, P_3'=P_4, \ldots, P_{t-1}'=P_t.$
We can repeat this argument for $P_1'$ and $P_2',$ thus reducing the number of paths 
again by 1, using at most two further edges from $G.$ Doing this iteratively for $t-1$ 
iterations (using at most $2t-2$ edges)
we finally end up with a path $P$
such that $V(P)=\bigcup_{i=1}^t V(P_i)=V(G)$ 
and $\End(P)\subseteq \bigcup_{i=1}^t \End(P_i).$
Let $\End(P)=\{x,y\}.$ By assumption, $d_G(x),d_G(y)\geq n/2.$ 
Analogously to Bondy's proof of Dirac's Theorem for the existence of Hamilton cycles (see e.g.\ \cite{West}), we can find (at most) two edges $f_1,f_2$ in $G$ such that $E(P)\cup \{f_1,f_2\}$
contains a Hamilton cycle of $G$.
\end{proof}

\section{Weak Perfect Matching Game}
\label{wPM}
The main goal of this section is to prove Theorem \ref{WeakPM}.
However, we prove a slightly stronger result which roughly says that
Maker can claim a perfect matching rapidly
even if we play on a nearly complete bipartite graph.
This statement is used later in Section~\ref{wSkf}.

\begin{proposition}\label{WeakPMstronger}
Let $a,C\in \N$ be constants with $a\geq 2$, 
then for every large enough $n$ the following holds:
Let $G\supseteq K_{n,n}-H$ be a graph such that $e(H)\leq C$. 
Playing an $(a:a)$ Maker-Breaker game on $G$, Maker has a strategy
to gain a perfect matching of $G$ within at most $\lfloor n/a \rfloor +1$
rounds. In case $G=K_{2n}$, she can even win within
$\lceil n/a \rceil$ rounds.
\end{proposition}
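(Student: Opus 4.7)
The plan is a two-phase strategy for Maker. Throughout, fix a bipartition $V(G) = X \cup Y$ with $|X| = |Y| = n$: the canonical one if $G \supseteq K_{n,n} - H$, an arbitrary one if $G = K_{2n}$. Maker maintains a matching $M \subseteq E(G)$; denote the corresponding unmatched sides by $U_X, U_Y$ and set $k := |U_X| = |U_Y|$, a common value preserved as long as each Maker step matches one $X$-vertex to one $Y$-vertex. For $v \in U_X$ let $U(v) := U_Y$, and for $v \in U_Y$ let $U(v) := U_X$. Let $T$ denote the target round count and $B$ Breaker's graph, so $e(B) \leq aT \leq n + a$ throughout.

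In Phase 1, Maker applies a priority-aware greedy rule. She first checks whether some $v \in U_X \cup U_Y$ has $d_B(v, U(v)) \geq \lceil k/2 \rceil$; if so, she matches $v$ to any free partner in $U(v)$, which exists since $v$ still has $\geq k/2 - O(a+C)$ free neighbours after accounting for $B$-edges, $H$-edges and Breaker's latest round. Otherwise every $v \in U_X$ has $d_B(v, U_Y) < \lceil k/2 \rceil$ (and symmetrically for $U_Y$), so $e_B(U_X, U_Y) < k \lceil k/2 \rceil$, leaving at least $k^2/2 - O(1)$ free edges in $E_G(U_X, U_Y)$; Maker claims any one of them. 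Each step extends $|M|$ by exactly one. Phase 1 continues until $k$ drops below a constant $k_0 = O(a + C)$.

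In Phase 2 only $k_0 = O(1)$ matching edges remain, and Maker has $aT - (n - k_0) = k_0 + O(a) = O(1)$ edges of slack. She completes $M$ via short augmenting paths: for each residual unmatched pair $u \in U_X, v \in U_Y$ whose direct edge $uv$ is blocked, she locates a matching edge $ww' \in M$ such that both $uw$ and $w'v$ are free (which exists since $B$ leaves nearly all of $K_{n,n}$ untouched), claims both, and redefines $M$ by replacing $ww'$ with the new pair. Each augmentation uses two slack edges and increases $|M|$ by one; the $O(1)$ available slack suffices for all $O(1)$ augmentations.

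The main obstacle is verifying the Phase 1 invariant, namely that Maker's $a$ priority steps per round keep pace with the $\leq 2a$ new ``violators'' Breaker can create per round. I would address this via a potential-function argument on something like $\sum_{v \in U_X \cup U_Y} \max(0, d_B(v, U(v)) - \lceil k/4 \rceil)$, bounding its net growth using the global constraint $e(B) \leq n + a$. The sharper bound $\lceil n/a \rceil$ for $G = K_{2n}$ then follows from the same plan with the greedy pool enlarged from $k^2$ to $\binom{2k}{2}$: the greedy branch remains feasible all the way down to $k = 1$, eliminating the need for Phase 2 and hence the slack round.
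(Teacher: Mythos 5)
Your high-level plan (grow a matching greedily with a degree-based priority rule, then finish with a short completion phase) is in the same spirit as the paper's proof, but the two places where the real work lies are exactly the places your proposal leaves open or gets wrong. First, the completion phase has a budget problem. In $\lfloor n/a\rfloor+1$ rounds Maker claims at most $n+a$ edges (strictly fewer when $a\nmid n$), and each of your augmentations wastes one edge (two claimed, net gain one), so you can afford at most about $a$ blocked pairs. But your Phase 1 stops at $k_0=O(a+C)$ unmatched pairs per side, which can exceed $a$, and nothing in your strategy bounds the number of Breaker edges inside $U_X\times U_Y$ at that moment: the threshold $d_B(v,U(v))\geq\lceil k/2\rceil$ is never triggered if Breaker simply places a matching (or any bounded-degree graph) inside the unmatched sets, so at the end of Phase 1 every residual pair can be blocked, and $k_0>a$ wasted edges exceed the slack. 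This is precisely what the paper's invariant (P1) is for: by always choosing endpoints that maximize the Breaker degree \emph{inside the isolated set}, Maker forces $e_B(I_i)\leq\max\{C-ia,0\}$, so when she enters the completion phase Breaker has at most $a$ edges among the leftover vertices; the paper then extracts a matching of size $t-1$ among the free interior edges via K\"onig--Egerv\'ary and needs at most \emph{one} augmentation, which is what keeps the count at $\lfloor n/a\rfloor+1$ (and at one extra round even when more than $a/2$ pairs are ``blocked''). Your potential-function sketch for Phase 1 is the missing core of the proof, not a routine verification, and as stated your rule does not control the interior edge count.

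Second, the sharper bound $\lceil n/a\rceil$ for $G=K_{2n}$ does not follow from ``greedy down to $k=1$.'' Breaker moves immediately before Maker's last step; if Maker is reducing the unmatched set one pair per step, Breaker can claim the unique remaining edge between the last two unmatched vertices and the greedy branch dies, priority rule or not. To win in $\lceil n/a\rceil$ rounds Maker must complete the final $a$ matching edges in a single move among the last $2a$ unmatched vertices, right after Breaker has added $a$ edges there; this needs both the interior-edge invariant and a further twist (the paper chooses the bipartition of those $2a$ vertices so that fewer than $a$ Breaker edges cross it, and only then does K\"onig give a matching of size $a$ rather than $a-1$). Neither ingredient appears in your proposal.
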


Before we give the proof of this proposition, let us first see 
how Theorem~\ref{WeakPM} can be deduced.

\begin{proofof}{Theorem~\ref{WeakPM}}
For $a=1$ the proof is given in \cite{HKSS09}. So, let $a\geq 2.$
Assume first that $n=2k$ is even. 
Since a perfect matching of $K_n$ has $n/2$ edges, 
the game obviously lasts at least $\lceil n/(2a) \rceil$ rounds.
By Proposition~\ref{WeakPMstronger}, Maker has
a strategy to win within $\lceil k/a \rceil=\lceil n/(2a) \rceil$ 
rounds.
Assume then $n$ to be odd. According to the rules, Maker wins if she claims
a matching covering all but one vertex, i.e.\ a matching of size $(n-1)/2.$
This obviously takes at least 
\begin{align*}
\left\lceil \frac{n-1}{2a} \right\rceil =
\begin{cases}
\lceil \frac{n}{2a} \rceil, & \text{ if }2a\nmid n-1\\
\lceil \frac{n}{2a} \rceil -1, & \text{ if }2a\ | \ n-1\\
\end{cases}
\end{align*}
rounds. However, Maker for her strategy can consider to play on the graph 
$K_{n-1}\subseteq K_n$. By the previous argument this takes her at most 
$\lceil (n-1)/(2a) \rceil$ rounds, provided $n$ is large enough. 
\end{proofof}

\begin{proofof}{Proposition~\ref{WeakPMstronger}}
Assume that Breaker starts the game. 
Let $X\cup Y$ be the bipartition of $V(G)=V(K_{n,n})$.
Further assume that $H$ belongs to Breaker's graph. 

\medskip 

{\bf Maker's strategy} is split into two stages.

\medskip

{\bf Stage I.} The first stage lasts exactly $\lceil n/a \rceil -1$ rounds.
For $0\leq i\leq \lceil n/a \rceil -1$, Maker ensures that immediately 
after her $i^{\text{th}}$ move 
her graph consists of a matching $M_i\subseteq E(X,Y)$ of size $i\cdot a$ 
and a set of isolated vertices $I_i=V\setminus V(M_i)$
such that the following properties hold:
\begin{itemize}
\item[$(P1)$] $e_B(I_i)\leq \max\{C-ia,0\}$,
\item[$(P2)$] $\forall v\in I_i:\, d_B(v)<n/8.$
\end{itemize}
Maker chooses the edges $e_1=e_1^{(i)},\ldots, e_a=e_a^{(i)}$ 
of her $i^{\text{th}}$ move in the following way:\\
She sets $\delta=0$ if $i\leq \lceil C/a \rceil$, and $\delta=1$
otherwise. Then for every $1\leq j\leq a-\delta$ 
she sets $I_{i-1}^{(j)}=I_{i-1}\setminus V(e_1,\ldots,e_{j-1})$
and chooses $e_j=x_jy_j\in E(I_{i-1}^{(j)})\cap E(X,Y)$ such that 
\begin{itemize}
\item $x_j$ maximizes $d_B(z,I_{i-1}^{(j)})$ over all choices $z\in I_{i-1}^{(j)},$ and
\item $y_j$ maximizes $d_B(z,I_{i-1}^{(j)})$ over all choices $z\in I_{i-1}^{(j)}$ with $x_jz\in E(X,Y)\setminus E(B)$.
\end{itemize}
Afterwards, if $i> \lceil C/a \rceil$ (and so $\delta=1$), 
she sets $E(I_{i-1}^{(a)})=I_{i-1}\setminus V(e_1,\ldots,e_{a-1})$
and chooses an unclaimed edge $e_a=x_ay_a\in E(I_{i-1}^{(a)})\cap E(X,Y)$
in such a way that $x_a$ maximizes $d_B(z,V)$ over all choices $z\in I_{i-1}^{(a)}.$
Finally, she sets $I_i=I_{i-1}^{(a+1)}:=I_{i-1}\setminus V(e_1,\ldots,e_a)$.
\medskip

{\bf Stage II.} If $a\nmid n$ or $G=K_{2n}$, Maker plays one further round to complete a perfect matching
of $G$. Otherwise, she does so within two more rounds. The details of how she can do this are given later in the proof.

\medskip

It is evident that, if Maker can follow the strategy, 
she wins the game within the claimed number of rounds. 
Thus, it remains to show that Maker can follow the proposed strategy.

\medskip

{\bf Stage I.}
We prove, by induction on $i,$ 
that Maker can follow the strategy of Stage I and ensure the mentioned properties
to hold immediately after her $i^{\text{th}}$ move.
For $i=0$ there is nothing to prove.
So, let $i>0.$ Assume that $(P1)$ and $(P2)$ were true immediately after Maker's 
$(i-1)^{\text{st}}$ move for the matching $M_{i-1}$ and the set $I_{i-1}.$
To show that Maker can follow the strategy for round $i$, we inductively prove
the following claim.

\begin{claim}\label{WeakPM:claim_new}
For every $1\leq j\leq a-\delta$,
Maker can claim $e_j$ and ensure
that immediately after claiming that edge, 
$e_B(I_{i-1}^{(j+1)})\leq \max\{C-(i-2)a-2j,a-2j,0\}$ holds.
\end{claim}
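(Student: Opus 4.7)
The plan is to prove the claim by induction on $j$, with the guiding heuristic that each of Maker's steps should remove about two Breaker edges from $e_B(I_{i-1}^{(j)})$ while this quantity is positive. For the base case $j=0$ (interpreted as bounding $e_B(I_{i-1}^{(1)})$ right before Maker plays in round $i$), inductive hypothesis $(P1)$ gives $e_B(I_{i-1})\leq\max\{C-(i-1)a,0\}$, and since Breaker's $i$-th move adds at most $a$ further edges to $e_B(I_{i-1})$, I obtain $e_B(I_{i-1}^{(1)})\leq\max\{C-(i-2)a,a\}$, which matches the claim's bound at $j=0$.

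For the inductive step, I will split on $d_B(x_j,I_{i-1}^{(j)})$. The easy case is $d_B(x_j,I_{i-1}^{(j)})\geq 2$: removing $x_j$ alone drops $e_B$ by at least two. Otherwise the maximizing choice of $x_j$ forces every vertex of $I_{i-1}^{(j)}$ to have Breaker-degree at most one there, so Breaker's subgraph on $I_{i-1}^{(j)}$ is a partial matching. If this matching has at least two edges, I plan to show that $y_j$ covers a second one: $y_j$ must live in the bipartition class opposite to $x_j$ with $x_jy_j\notin E(B)$, but property $(P2)$ together with $e(H)\leq C$ leaves linearly many eligible candidates, so the maximizer among them reaches a matching endpoint whenever some further matching edge of Breaker has an endpoint on the opposite side of $x_j$.

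The main obstacle is the sub-case in which every further matching edge of Breaker lies entirely on the same side as $x_j$, which can occur only when $G$ contains same-side edges (for instance $G=K_{2n}$). In that situation Maker's step contributes only a single-edge drop, and the naive decrease-by-two argument fails locally. To close this gap I plan to exploit the slack already built into the inductive bound on $e_B(I_{i-1}^{(j)})$: unless Breaker in round $i$ has actually injected $a$ fresh edges all the way inside $I_{i-1}$ so as to drive this bound tight, a one-edge drop still lands below the target; and in the remaining tight case I would argue, using $(P2)$ and a round-by-round accounting of how Breaker's edges enter $I_{i-1}$, that a one-sided Breaker matching necessarily coincides with a vertex of $d_B(\cdot,I_{i-1}^{(j')})\geq 2$ at some other step $j'$, whose double decrease amortizes the current loss. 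Iterating this amortization across the $a-\delta$ steps of round $i$ will deliver the claimed bound $\max\{C-(i-2)a-2j,a-2j,0\}$ at every $j$.
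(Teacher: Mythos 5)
Your overall skeleton is the same as the paper's: induction over the steps $j$ of round $i$, the base bound $e_B(I_{i-1}^{(1)})\leq\max\{C-(i-2)a,a\}$ from (P1) plus Breaker's bias, and the case split according to whether the maximum Breaker-degree inside $I_{i-1}^{(j)}$ is at least $2$ or the restriction of $B$ to $I_{i-1}^{(j)}$ is a matching. Two points in your write-up are off, though. First, the reason the maximizer $y_j$ reaches a second matching edge is simply that the unique $B$-neighbour of $x_j$ inside $I_{i-1}^{(j)}$ is its own partner, so any endpoint of another matching edge lying on the side opposite to $x_j$ is automatically an eligible candidate; invoking (P2) and ``linearly many eligible candidates'' is neither needed nor available late in Stage I, when $I_{i-1}^{(j)}$ has only $O(a)$ vertices per side. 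For the same reason you never actually establish the part of the claim asserting that Maker \emph{can} claim $e_j$: the paper does this by comparing $e_B(I_{i-1}^{(j)})$ with $|I_{i-1}^{(j)}\cap X|=|I_{i-1}^{(j)}\cap Y|$ (fewer Breaker edges inside the isolated set than isolated vertices on either side, so $x_j$ cannot be $B$-adjacent to all isolated vertices across from it), not via (P2).

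The genuine gap is your treatment of the ``main obstacle''. The claim is a per-step statement -- the bound must hold immediately after each single edge $e_j$ -- and it is this per-step form that the surrounding argument uses (for instance to get $e_B(I_{i-1}^{(a)})=0$ before the last edge when $\delta=1$, and (P1) after the move). An amortization that lets step $j$ overshoot the bound and repays the deficit at a later step $j'$ therefore cannot prove the statement as formulated. Moreover, the central assertion of your sketch -- that a one-sided Breaker matching ``necessarily coincides with'' a vertex of $d_B(\cdot,I_{i-1}^{(j')})\geq 2$ at some other step -- is unsubstantiated and fails against the natural Breaker play: on a board containing edges inside $X$ (e.g.\ $G=K_{2n}$), Breaker can spend his whole $i^{\text{th}}$ move on a matching inside $X\cap I_{i-1}$; then at every step of the round the maximum Breaker-degree inside the isolated set equals $1$, each $e_j$ removes exactly one Breaker edge, and no compensating step ever occurs, so the quantity $e_B(I_{i-1}^{(j+1)})$ exceeds the stated bound already for small $j$. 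In other words, no within-round accounting can deliver the decrease by $2$ per step in that scenario. You have correctly located the one delicate point: the paper's own one-line justification ($d_B(x_1,I_{i-1}^{(1)})+d_B(y_1,I_{i-1}^{(1)})\geq 2$ ``easily follows'') is airtight only when all Breaker edges inside the isolated set are cross edges, as is automatic for bipartite $G$; but your proposed amortization does not repair the same-side case, so the proposal does not constitute a proof of the claim beyond the bipartite situation.
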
 

Indeed, if this claim is true, then Maker can claim the first $a-\delta$ edges, as described.
If $\delta=1$ and thus $i> \lceil C/a\rceil$, then  
after claiming the first $a-1$ edges,
we have 
$e_B(I_{i-1}^{(a)})=0$. That is,
Breaker has no edges among the remaining isolated vertices, and thus 
Maker can claim $e_a$ as described.

\begin{proofof}{Claim~\ref{WeakPM:claim_new}}
We apply induction on $j$, the number of steps in round $i$. At first, let $j=1$. If $i\leq \lceil C/a\rceil$, then by (P1) after 
Maker's $(i-1)^{\text{st}}$ move and
since Breaker has bias $a$, we have
$e_B(I_{i-1})\leq \max\{C-(i-2)a,a\}<n/2<|I_{i-1}\cap X|=|I_{i-1}\cap Y|$
right before Maker's first step in round $i$.
If otherwise $\lceil C/a\rceil<i\leq \lceil n/a \rceil -1$,
then analogously we have
$e_B(I_{i-1})\leq a<n-(i-1)a=|I_{i-1}\cap X|=|I_{i-1}\cap Y|$
right before Maker's first step in round $i$.
So, in either case, looking at Breaker's graph, 
none of the vertices
from $I_{i-1}\cap X$ can be adjacent 
to all vertices from $I_{i-1}\cap Y$,
and vice versa. So, Maker can claim $e_1$
as given by the strategy.
Now, if $e_B(I_{i-1}^{(1)})\geq 2,$ then by the choice of $e_1$
it easily follows that $d_B(x_1,I_{i-1}^{(1)})+ d_B(y_1,I_{i-1}^{(1)})\geq 2.$
But this gives $e_B(I_{i-1}^{(2)})\leq e_B(I_{i-1}^{(1)})-2\leq \max\{C-(i-2)a-2,a-2,0\}$.
Otherwise, if $e_B(I_{i-1}^{(1)})\leq 1,$ then $e_1$ is adjacent to all Breaker edges 
in $I_{i-1}^{(1)}$, ensuring $e_B(I_{i-1}^{(2)})=0.$

\medskip

Now, let $j>1$. After $e_{j-1}$ is claimed,  
$e_B(I_{i-1}^{(j)})\leq \max\{C-(i-2)a-2(j-1),a-2(j-1),0\}$ holds by induction.
If $i\leq \lceil C/a\rceil$, then
$e_B(I_{i-1}^{(j)})< n/2<|I_{i-1}^{(j)}\cap X|=|I_{i-1}^{(j)}\cap Y|.$
If $i> \lceil C/a\rceil,$
then 
$e_B(I_{i-1}^{(j)})\leq \max\{a-2(j-1),0\}< |I_{i-1}\cap Y|-(j-1)=|I_{i-1}^{(j)}\cap X|=|I_{i-1}^{(j)}\cap Y|.$
So, when Maker wants to claim her edge $e_j$, 
none of the vertices
from $I_{i-1}^{(j)}\cap X$ can be adjacent 
to all vertices from $I_{i-1}^{(j)}\cap Y$,
and vice versa.
So, as for the induction start, she can claim $e_j$ and ensure
$e_B(I_{i-1}^{(j+1)})\leq \max\{e_B(I_{i-1}^{(j)})-2,0\}\leq\max\{C-(i-2)a-2j,a-2j,0\}.$
\end{proofof}

When Maker claimed all the $a$ edges, she has a matching in $E(X,Y)$
of size $|M_i|=|M_{i-1}|+a=ia.$
 Thus, to finish the discussion of Stage I, it remains to show 
that the mentioned properties are maintained.
Using the claim, (P1) is given as follows:
If $i\leq \lceil n/a \rceil$, then immediately
after Maker's $i^{\text{th}}$ move,
$e_B(I_i)=e_B(I_{i-1}^{(a+1)})\leq \max\{C-(i-2)a-2a,a-2a,0\}=\max\{C-ia,0\}.$
If $i>\lceil n/a \rceil$, then
immediately after claiming $e_{a-1}$ we have
$e_B(I_{i-1}^{(a)})\leq \max\{C-(i-2)a-2(a-1),a-2(a-1),0\}=0.$ 
In particular, $e_B(I_i)=0$ follows then.
Assume now that (P2) is violated. Then after Maker's move there needs to be 
a vertex $v_i\in I_i$ of degree at least $n/8$ in Breaker's graph. 
In particular, $i\geq n/(8a).$ 
However, Maker then in each of the last twenty rounds 
chose (with her last edge)
an isolated vertex in her graph of maximum degree in Breaker's graph to be matched and therefore excluded from the set of isolated vertices.
That means, if after round $i$ there really was such a vertex $v_i,$
then twenty rounds before there must have been at least twenty vertices
of degree at least $n/8 - 20a$ in Breaker's graph, since otherwise $v_i$ would have been matched earlier.
But then, provided $n$ is large enough, Breaker has claimed more than $2n$ edges,
which is in contradiction to the number of rounds played so far.\\

{\bf Stage II.} We need to show that Maker can complete a perfect matching 
within one round or two rounds, respectively. From now on, 
let $t=|I_{\lceil n/a \rceil-1}\cap X|=|I_{\lceil n/a \rceil-1}\cap Y|$
be the number of isolated vertices in $X$ and $Y$, respectively, at the moment
when Maker enters Stage II.

\medskip

{\bf Case 1. $\mathbf{a\nmid n}$.} Observe that $t=n-|M_{\lceil n/a \rceil -1}|\leq a-1$ holds in this case.

{\bf Case 1.1.} Assume first that $t\leq a/2$.
Maker then partitions the set of isolated vertices
into $t$ pairs $\{v_1,w_1\},\ldots, \{v_t,w_t\}$
with $v_i\in Y$ and $w_i\in X$.
By Property $(P2)$, she then finds distinct edges 
$m_1=x_1y_1,$\ldots\ $m_t=x_ty_t$ in $M_{\lceil n/a \rceil-1}$
such that for each $1\leq i\leq  t,$ 
$x_i\in X$, $y_i\in Y$ and
the edges $v_ix_i$ and $w_iy_i$ do not belong to Breaker's graph.
She then claims the edges $v_ix_i$ and $w_iy_i$, in total $2t\leq a$.
By this, she creates a perfect matching of $G$.

{\bf Case 1.2.} Assume then that $a/2<t\leq a-1$.
Let $F$ be the graph induced by all free edges
between $I_{\lceil n/a \rceil-1}\cap X$ and $I_{\lceil n/a \rceil-1}\cap Y$.
Since by (P2) Breaker
can have at most $a$ edges among all isolated vertices,
then $e(F)\geq t^2-a$. Thus, the smallest vertex cover in $F$
is of size at least $(t^2-a)/t=t-a/t> t-2$. 
Therefore, by the theorem of K\"onig-Egev\'ary (see e.g.\ \cite{West}) $F$ has a matching of size at least $t-1$.
Maker claims this matching. 
This way, she creates a matching of $G$ of size $n-1$, and
two isolated vertices $v\in Y$ and $w\in X$.
Again, using Property (P1), she finds an edge
$xy$ in her matching, with $x\in X$ and $y\in Y$,
such that $vx$ and $wy$ are unclaimed. 
She claims these, and then she is done as before.
In total, she claims at most $(t-1)+2\leq a$ edges.

{\bf Case 2. $\mathbf{a|n}$.} Observe first that $t=a$. 
We now want to finish the perfect matching
within one round if $G=K_{2n}$. Otherwise, it is enough to
finish within two further rounds.

{\bf Case 2.1.} Assume that $G=K_{2n}$. 
When Maker enters Stage II, by Property (P1), Breaker claims at most $a$
edges in $G[I_{\lceil n/a \rceil-1}]$. So, there is a bipartite subgraph $G'\subset G[I_{\lceil n/a \rceil-1}]$
with classes of size $a$, such that Breaker claims
less than $a$ edges of $G'$. Set $F:=G'\setminus B$.
Then, analogously to Case 1.2, $F$ has a matching of size at least
$(t^2-(a-1))/{t}>a-1$. Thus, within one round, Maker can claim a matching of
size $a$ in $F$, which completes a perfect matching of $G$. 

{\bf Case 2.1.} Finally, assume $G\neq K_{2n}$. Let 
$F$ be the graph induced by all free edges
between $I_{\lceil n/a \rceil-1}\cap X$ and $I_{\lceil n/a \rceil-1}\cap Y.$
Since by (P2) Breaker
can have at most $a$ edges among all isolated vertices,
we analogously conclude that $F$ contains a matching of size at least
$a-1$. Maker claims such a matching in the first round of Stage II, 
and afterwards, 
she has a matching of $G$ of size $n-1$, and
two isolated vertices $v\in Y$ and $w\in X$.
If $vw$ is free, she claims it and wins. Otherwise, in the next round,
analogously to Case 1.2, she finds an edge
$xy$ in her matching, with $x\in X$ and $y\in Y$,
such that $vx$ and $wy$ are unclaimed. 
She then claims these two edges.
\end{proofof}

\section{Weak Hamilton Cycle Game}
\label{wHCG}
\begin{proofof}{Theorem~\ref{WeakHam}}
For $a=1$ the proof is given in \cite{HS09}. So, let $a\geq 2$
and assume that Breaker starts the game.
Since a Hamilton cycle has $n$ edges, 
the game obviously lasts at least $\lceil n/a \rceil$ rounds.
Moreover, one easily verifies that, if $a=2$ and $n$ is even,
$\tau_{\ham}(a:a)\geq \lceil n/a \rceil +1.$ 
Indeed, assume in this case that 
Maker had a strategy to create a Hamilton cycle within $n/2$ rounds. 
Then, after her $(n/2 -1)^{\text{st}}$ round her graph would consist of 
two paths $P_1$ and $P_2$ (maybe one of length zero). 
In order to win in the next round, 
she would need to claim two edges between $\End(P_1)$ and $\End(P_2)$ 
in such a way that a Hamilton cycle is created. 
However, before this, Breaker can claim all edges of $E(x,\End(P_2))$ 
for some $x\in \End(P_1),$ therefore delaying Maker's win by at least one further round,
in contradiction to the assumption.

\medskip

Thus, it remains to prove that $\tau_{\ham}(a:a)\leq \lceil n/a \rceil + 1$
if $a=2$ and $n$ is even, and $\tau_{\ham}(a:a)\leq \lceil n/a \rceil $
otherwise (for large enough $n$ depending on $a$).

\medskip

%
%

{\bf Maker's strategy.}
The main idea of Maker's strategy is to create a linear forest, i.e.\ 
a graph which only consists of vertex disjoint paths. Her strategy is divided into three stages. In the first stage, she starts with a perfect matching,
similarly to the strategy given for $a=1$ in \cite{HKSS09}. 
Then, in the second stage, she connects the edges of the matchings to
create larger paths. Finally, in the third stage, when the number of paths is at most $a$, 
she completes a Hamilton cycle in at most two further rounds, making use of Proposition~\ref{rotation} and Corollary~\ref{Hamiltonian}. 

\medskip

Assume the game is in progress. By $\P$ we denote the set of Maker's (maximal) paths, where an isolated vertex is seen as a path of length zero
(empty path). Throughout the game this set is updated, 
meaning that whenever Maker connects the endpoints of two paths $P_1,P_2\in \P$
by an edge $e,$ we delete $P_1$ and $P_2$ from $\P$ and add the new path
induced by $E(P_1)\cup E(P_2)\cup \{e\}$. 
By $p_i$ we denote the size of $\P$ immediately after Maker's $i^{\text{th}}$ move.
Observe that, as long as Maker's graph is a linear forest, $p_i=n-ia$ holds.
  
Now, let $e$ be some edge that is incident with the endpoints of two different paths 
from $\P.$ We say that $e$ is {\em good}, if it is free;
otherwise we call it {\em bad}.
By $br_i$ we denote the number of bad edges right after
Maker's $i^{\text{th}}$ move. If an edge $e$ is good, we set $D(e)$ 
to be the number of bad edges adjacent to $e$. 
Thus, $D(e)$ depends on the dynamic family $\P$ and on the considered round. 

\medskip

{\bf Stage I.} Within $\lceil n/(2a) \rceil$ or $\lceil n/(2a) \rceil + 1$ rounds,
Maker claims a collection of vertex disjoint paths,
each of length at least 1, such that every vertex is incident with
one of these paths, and there is no further Maker's edge.
The details of how she can do this follow later in the proof. 
Afterwards, Maker proceeds with Stage II.

\medskip

{\bf Stage II.}
Let $t_1\in \{\lceil n/2a \rceil +1, \lceil n/2a \rceil +2\}$ be the round in which
Maker enters Stage II. Let $$t_2:=\lceil n/a \rceil - \lceil n/(6a^2) \rceil.$$
Maker now connects the paths of her collection $\P$.
To be able to do so, she needs to guarantee that the number of bad edges does not become too large. For that reason, if a lot of bad edges exist, 
she claims good edges that are adjacent to many bad edges (part IIa). 
Moreover, similarly to the perfect matching game, she maintains some
degree condition 
by caring about large degree vertices in Breaker's graph (part IIb). To be more precise:\\
For every $t_1\leq i\leq \lceil n/a \rceil -1,$ in her 
$i^{\text{th}}$, Maker claims edges $e_1=e_1^{(i)},\ldots ,e_a=e_a^{(i)}$ 
one after the other. The $j^{\text{th}}$ edge $e_j$ is claimed according to the following rules:

\begin{itemize} \item In order to choose $e_j$ she considers the following two cases.
\begin{itemize}
\item[{\bf IIa.}] If $i\leq t_2$ or $i \geq t_2 + 8,$ 
	Maker claims a good edge $e_j$ such that $D(e_j)$ is maximal.
\item[{\bf IIb.}] Otherwise, if $t_2+1\leq i\leq t_2+7,$ 
	she chooses a vertex $x_j\in \End(\P)$ of maximal  degree in Breaker's graph and then claims an arbitrary good edge $e_j=x_jy_j.$
\end{itemize}
	\item By claiming $e_j$, Maker connects two paths $P_{j,1},\ P_{j,2}\in\P.$	
	 Accordingly, she then deletes $P_{j,1},\ P_{j,2}$ from $\P,$ and adds to $\P$
		the path induced by $E(P_{j,1})\cup E(P_{j,2})\cup \{e_j\}$.  
		She updates the sets of good and bad edges 
		and the values $D(\cdot)$ before she proceeds with $e_{j+1}.$
\end{itemize}

{\bf Stage III.} 
If $a=2$ and $n$ is even, Maker claims a Hamilton cycle within the next two rounds.
Otherwise, she does so within one round. The details of how Maker can do this, follow later in the proof.

\medskip

It is evident that, if Maker can follow the strategy, 
she wins the game within the desired number of rounds. 
Thus, it remains to show that Maker can follow the proposed strategy.
Before that, let us prove the following propositions
which bound the number of bad edges throughout the game.

\medskip

\begin{proposition}\label{WeakHam:proposition0}
At any point of the game, when $|\P|\geq 2$ and
\begin{itemize}
\item each $v\in \End(\P)$ is incident with at least one good edge,
\item $D(e)\leq 1$ holds for all good edges $e$.
\end{itemize}
Then the number of bad edges is at most $1.$
\end{proposition}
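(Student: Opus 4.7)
The plan is to argue by contradiction: assume there are at least two distinct bad edges $e_1, e_2$ and derive a violation of one of the two stated hypotheses. I split the argument according to whether $e_1$ and $e_2$ share a vertex.

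In the first (easy) case, $e_1$ and $e_2$ meet at a common vertex $v$, which is automatically an endpoint of some path in $\P$ (since bad edges join path-endpoints by definition). By the first hypothesis, $v$ is incident with at least one good edge $g$. Since $e_1$ and $e_2$ both pass through $v$, the edge $g$ is adjacent to both of them, giving $D(g)\geq 2$ and contradicting the second hypothesis.

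The more interesting case is when $e_1 = x_1y_1$ and $e_2 = x_2y_2$ are vertex-disjoint. Here the plan is to produce a third bad edge $f$ that shares a vertex with $e_1$, thereby reducing the problem to Case 1. I look at the four crossing edges $x_1x_2, x_1y_2, y_1x_2, y_1y_2$. A short check, which I regard as the crux, shows that at least one of them, call it $f$, joins endpoints of two distinct paths of $\P$: otherwise, writing $P(v)$ for the path containing $v$, the four ``same-path'' conditions would force $P(x_2)=P(x_1)=P(y_2)$, contradicting $P(x_2)\neq P(y_2)$, which holds because $e_2$ itself lies between endpoints of different paths. Having such an $f$, note that it automatically shares a vertex with both $e_1$ and $e_2$; so if $f$ were free (good) then $D(f)\geq 2$, contradicting the second hypothesis. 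Hence $f$ is bad, and now the pair $(e_1,f)$ shares a vertex, so applying the Case 1 argument to $(e_1,f)$ produces the desired contradiction.

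The only step that requires any thought is the existence of a crossing edge between endpoints of two different paths; this is a one-line combinatorial observation, but it is exactly what allows the reduction from the vertex-disjoint case to the adjacent case. Everything else is just the observation that a good edge incident to two bad edges at a common vertex automatically violates $D(\cdot)\leq 1$.
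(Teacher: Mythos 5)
Your proof is correct and follows essentially the same route as the paper: the observation that a shared endpoint of two bad edges would force a good edge $g$ with $D(g)\geq 2$ is exactly the paper's implicit step that bad edges form a matching on $\End(\P)$, and your crossing-edge argument is just a more explicit version of the paper's claim that a good edge adjacent to two matched bad edges exists. No gaps; you merely spell out details the paper leaves terse.
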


\begin{proof}
By assumption, each vertex in $\End(\P)$ is incident with at most one bad edge.
If there were at least 2 bad edges, 
they would form a matching. Maker could then find a good edge $e'$
that is adjacent to two of these edges, in contradiction to $D(e')\leq 1.$
\end{proof}

\begin{proposition}\label{WeakHam:proposition1}
At any point of the game, when $|\P|\geq 4$ and
\begin{itemize}
\item each $v\in \End(\P)$ is incident with at least one good edge,
\item $D(e)\leq 2$ holds for all good edges $e$.
\end{itemize}
Then the number of bad edges is at most $|\P|.$
\end{proposition}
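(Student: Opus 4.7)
For each endpoint $v\in\End(\P)$, let $d_b(v)$ denote the number of bad edges incident to $v$. If $e=uv$ is a good edge, so $u,v$ are endpoints of different paths and $e$ is unclaimed, then every bad edge incident to $u$ or to $v$ is adjacent to $e$, giving $D(e)=d_b(u)+d_b(v)$. Since by hypothesis every endpoint $v$ is incident to at least one good edge $vw$, we get $d_b(v)\leq d_b(v)+d_b(w)=D(vw)\leq 2$, so every endpoint has bad-degree at most $2$. I would then partition $\End(\P)$ as $A_0\cup A_1\cup A_2$ according to the value of $d_b$, set $n_i=|A_i|$, and note that $b=\tfrac{1}{2}(n_1+2n_2)$.

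Next I would split into two cases. If $A_2=\emptyset$, then directly $b=n_1/2\leq |\End(\P)|/2\leq |\P|=p$. Otherwise, fix any $v\in A_2$ and let $P_v$ denote its path. For any endpoint $u\in A_1\cup A_2$ lying in a path different from $P_v$, the edge $vu$ cannot be Maker's (since $v$ and $u$ lie in different components of Maker's graph) and cannot be good either, for otherwise $D(vu)\geq d_b(v)+d_b(u)\geq 3>2$; hence $vu$ is bad. Therefore $|(A_1\cup A_2)\setminus\End(P_v)|\leq d_b(v)=2$, which yields
\[
n_1+n_2 \;\leq\; 2 + |(A_1\cup A_2)\cap \End(P_v)| \;\leq\; 4.
\]
Consequently $b=\tfrac{1}{2}(n_1+n_2)+\tfrac{1}{2}n_2 \leq 2+2 = 4 \leq p$, where the final inequality is exactly the hypothesis $|\P|\geq 4$.

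The main difficulty is to exploit the constraint $D(e)\leq 2$ sharply; a naive double-counting $\sum_{e\text{ good}} D(e)\leq 2\cdot|\{\text{good edges}\}|$ only yields $b\leq 2p$. The key structural insight is that an endpoint of bad-degree $2$ is so rigid that \emph{every} endpoint in $A_1\cup A_2$ lying in a different path must be joined to it by a bad edge, which collapses $A_1\cup A_2$ to at most four vertices as soon as $A_2$ is non-empty. The assumption $|\P|\geq 4$ is then precisely what is needed to absorb the constant that emerges from this collapse.
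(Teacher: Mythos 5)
Your proof is correct and follows essentially the same route as the paper's: deduce $d_b(v)\leq 2$ for every $v\in\End(\P)$ from the good-edge hypothesis, handle the case with no bad-degree-$2$ endpoint by the matching bound $b\leq|\End(\P)|/2\leq|\P|$, and in the remaining case use the key observation that a bad-degree-$2$ vertex $v$ forces every other endpoint carrying a bad edge and lying off $P_v$ to be one of $v$'s two bad neighbours (the paper's ``$D(xw)\geq 3$'' step). The only difference is bookkeeping: you extract the final bound $b\leq 4\leq|\P|$ by a handshake count over $A_1\cup A_2$, whereas the paper enumerates the at most four possible bad edges explicitly.
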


\begin{proof}
By assumption, each vertex in $\End(\P)$ is incident with at most two bad edges.
If each vertex is incident with at most one bad edge,
then the bad edges form a matching on $\End(\P)$.
Thus, there can be at most $|\P|$ such edges. So, assume that there is a vertex $x$ incident with exactly two bad edges $xy_1$ and $xy_2$. Then the number of bad edges
is at most $4$, which can be seen as follows: 
Let $z$ be the other endpoint of the path that
$x$ belongs to. If $y_1$ and $y_2$ belong to the same path in $\P$, then the only further edges that could be bad are $zy_1$ and $zy_2.$ Indeed, if there was another endpoint $w\notin \{x,y_1,y_2,z\}$ 
incident with some bad edge, then $D(xw)\geq 3,$ a contradiction.
Otherwise, if $y_1$ and $y_2$ belong to different paths, then 
similarly one observes that $y_1y_2$ is the only edge that could be bad
besides $xy_1$ and $xy_2$. 
\end{proof}

\begin{proposition}\label{WeakHam:proposition2}
Let $br$ and $p=|\P|>2$ be the numbers of bad edges and Maker's paths, respectively,
immediately before Maker claims some edge $e$. If
$br<2p-2,$ then the following holds:

\begin{enumerate}[i)]
\item Each vertex in $\End(\P)$ is
incident with at least one good edge. 
\item If $e$ is a good edge such that $D(e)$ is maximal 
(at the moment when $e$ is chosen),
and if $br'$ and $p'$ are the numbers of bad edges and Maker's paths
immediately after Maker claimed $e,$
then again $br'<2p'-2.$ Moreover, $br'=0$ if $D(e)\leq 1.$ 
\end{enumerate}
\end{proposition}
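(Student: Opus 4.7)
The plan is to study the auxiliary graph $H$ on vertex set $\End(\P)$ whose edges are precisely the bad edges. Since after Stage I each path in $\P$ has length at least $1$, we have $|\End(\P)|=2p$ and $br=e(H)$. For any good edge $e=xy$, since $xy\notin E(H)$, one has $D(e)=d_H(x)+d_H(y)$. Part (i) is then immediate: if some $v\in\End(\P)$ had no good edge incident with it, all $2p-2$ edges from $v$ to endpoints of other paths would be bad, so $d_H(v)=2p-2$ and $br\geq 2p-2$, contradicting the hypothesis $br<2p-2$.

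For (ii), I first track how $br$ changes when Maker claims $e=xy$, merging the two paths $P_1,P_2$ containing $x,y$ (with other endpoints $x',y'$) into a single path. The edges that drop out of the set ``between endpoints of different paths'' are exactly those incident with $x$ or $y$, together with $x'y'$; counting how many of these are bad gives
\[
br'=br-D(e)-\mathbf{1}[x'y'\in E(H)]\leq br-D(e).
\]
Since $p'=p-1$ and hence $2p'-2=2p-4$, it suffices to show $D(e)\geq br-2p+5$ whenever $br\in\{2p-4,2p-3\}$, the case $br\leq 2p-5$ being automatic.

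Let $\Delta:=\max_v d_H(v)$ and pick $v^*$ realising it; by (i) $v^*$ is incident with some good edge $v^*w$, so by maximality of $D$ we have $D(e)\geq D(v^*w)\geq\Delta$. If $br=2p-4$ (and hence $br\geq 2$ since $p\geq 3$), then $\Delta\geq 1$ and so $D(e)\geq 1$, as needed. If $br=2p-3$ and $\Delta\geq 2$ we are done immediately; otherwise $\Delta=1$ means $H$ is a matching, forcing $br\leq p$ and thus $p=3$, $br=3$, so $H$ is a perfect matching on the six endpoints, and then every good edge $xy$ satisfies $d_H(x)=d_H(y)=1$, giving $D(e)=2$. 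I expect this small exceptional configuration ($p=3$ with $\Delta=1$) to be the main obstacle, since one must verify explicitly that a good edge of $D$-value $2$ actually exists under the matching constraint.

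For the \emph{moreover} statement, assume the maximum $D$-value is at most $1$. I claim $br\leq 1$: if two bad edges shared a vertex, some $d_H$ would be $\geq 2$, giving via (i) a good edge of $D$-value $\geq 2$; if two bad edges were disjoint, examining the edges among their four endpoints produces either a good edge joining two endpoints of $d_H$-value $\geq 1$ each, or an additional bad edge boosting some $d_H$ to $2$, and in either case (i) yields a good edge of $D$-value $\geq 2$, contradicting $D(e)\leq 1$. Finally, $br=0$ gives $br'\leq 0$ trivially, while $br=1$ forces $\Delta=1$ and hence $D(e)=1$, so the inequality $br'\leq br-D(e)$ gives $br'\leq 0$ as required.
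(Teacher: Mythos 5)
Your argument is correct, but it is organized rather differently from the paper's. Part (i) is the same degree count (each endpoint is incident with exactly $2(p-1)$ classified edges, so it cannot see only bad ones when $br<2p-2$); both you and the paper tacitly use that all paths are non-empty here, which is legitimate since the proposition is only invoked in Stages II and III. For part (ii), however, the paper splits on the value of $D(e)$: if $D(e)\geq 2$, then $br'\leq br-2<2p-4=2p'-2$ at once, with no information about the structure of the bad edges needed; if $D(e)\leq 1$, maximality of $D(e)$ means every good edge has $D$-value at most $1$, so Proposition~\ref{WeakHam:proposition0} together with part (i) gives $br\leq 1$ and hence $br'=0$, which also settles the ``moreover'' clause. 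You instead split on the value of $br$ (at most $2p-5$, equal to $2p-4$, equal to $2p-3$), which forces you to prove lower bounds on $D(e)$ via the maximum degree $\Delta$ of the bad-edge graph and to analyse the exceptional configuration $p=3$ with the bad edges forming a perfect matching on the six endpoints -- work that the paper's $D(e)$-based split makes unnecessary, because $D(e)\geq 2$ already suffices regardless of how large $br$ is. For the ``moreover'' statement you essentially re-derive Proposition~\ref{WeakHam:proposition0} inline, in fact slightly more carefully than the paper, since you check that the cross pairs between two disjoint bad edges cannot all be same-path pairs and may themselves be bad. So the ingredients are the same -- the relation $br'\leq br-D(e)$ and the counting from (i) -- but your decomposition is heavier; what it buys is a self-contained proof that does not cite Proposition~\ref{WeakHam:proposition0} for the main inequality, at the cost of the extra $\Delta$ and $p=3$ analysis.
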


\begin{proof}
Part $i)$ of the proposition holds,
since for each vertex the number
of incident good edges and the number of incident bad edges
sums up to $2(p-1).$\\
\noindent So, consider part $ii)$.
If $D(e)\geq 2,$ then Maker gets rid of at least two bad edges by claiming $e$
which gives $br'\leq br- 2<2p-4=2p'-2.$ 
Otherwise, if $D(e)\leq 1,$ then $br\leq 1$,
by Proposition~\ref{WeakHam:proposition0}. 
By the choice of $e$, it follows that $br'=0.$
\end{proof}

\medskip

The last proposition turns out to be very helpful for the
discussion of Stage II. The reason is that whenever $br<2p-2$
holds, then it tells us that Maker can claim a good edge as asked by
the strategy of Stage IIa. Moreover,
after Maker claimed such an edge (and thus $br'<2p'-2$ holds),
we can reapply this proposition, and continue this way
until Maker's move in Stage IIa is over.
With all the previous propositions in hand, let us now prove that Maker can follow the strategy.

\medskip

{\bf Stage I.} If $n$ is even, Maker plays $\lceil n/(2a) \rceil$ rounds
according to the strategy given for the perfect matching game.
This produces a perfect matching (thus every vertex is covered)
plus at most $a$ further edges that, together with the matching edges, 
form a linear forest. In case this strategy stops in round
$\lceil n/(2a) \rceil$ before Maker claimed exactly $a$ edges,
Maker claims further edges that maintain a linear forest.
Note that this is possible since $n$ is large enough and
Breaker so far claimed at most $n/2$ edges.
If $n$ is odd, Maker plays $\lceil n/(2a) \rceil$ rounds
on $K_{n-1}\subseteq K_n$,
analogously occupying a family of paths of length at least 1, 
covering all vertices of $K_{n-1}$. 
In the next round, she connects the unique vertex $v\in V(K_n)\setminus V(K_{n-1})$ 
to an endpoint of one of her paths,
and afterwards claims $a-1$ further edges such that her graph remains a linear forest. 
Again, this is possible, since Breaker so far claimed at most $n/2+2a$ edges.
\medskip

{\bf Stage II.}
Observe first that, when Maker enters Stage II, 
her collection $\P$ consists of $p_{t_1-1}=n-(t_1-1)a\geq n/2 - 2a$ paths.
Moreover, immediately after her previous move the number of bad edges was
$br_{t_1-1}\leq (t_1-1) a\leq n/2 + 3a\leq p_{t_1-1}+5a.$
The following claim splits Stage II naturally into three parts
and ensures for each part that Maker can follow the proposed strategy.

\medskip

\begin{claim}\label{WeakHam:claim}
For Stage II the following is true.
\begin{enumerate}[(a)]
\item For every $t_1\leq i\leq t_2,$ 
Maker can make her $i^{\text{th}}$ move according to Stage IIa.
After that move, 
\begin{align}\label{inequalities1}
br_i\leq p_i-a \quad \text{ or } \quad br_i-p_i<br_{i-1}-p_{i-1}.
\end{align}
In particular, $br_i\leq p_i+5a$ for every $t_1\leq i\leq t_2$,
and $br_{i}\leq p_{i}-a$  for every $t_1+6a\leq i \leq t_2.$
\item
For every $t_2+1\leq i\leq t_2+7,$ 
Maker can follow her $i^{\text{th}}$ move according to Stage IIb,
and after that $br_i\leq p_i+13a.$
\item
For every $t_2+8\leq i\leq \lceil n/a \rceil -1,$ 
Maker can follow her $i^{\text{th}}$ move according to Stage IIa.
After that move, \\[-0.7cm]
\begin{align}\label{inequalities2}
br_i\leq \max\{p_i-a,0\} \quad \text{ or } \quad br_i-p_i<br_{i-1}-p_{i-1}.
\end{align}
In particular, $br_i\leq p_i+13a$ always, and
$br_i\leq \max\{p_i-a,0\}$ for every $i\geq t_2+14a+8.$
\end{enumerate}
\end{claim}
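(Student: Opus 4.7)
The plan is to prove all three parts of the claim by a unified induction on $i$, by defining the potential $\Phi_i := br_i - p_i$ and analyzing each round $i$ step by step using Propositions \ref{WeakHam:proposition0}--\ref{WeakHam:proposition2}. Throughout the argument I would write $br^{(1)}, p^{(1)}$ for the counts immediately after Breaker's move in round $i$ and $br^{(j+1)}, p^{(j+1)}$ for those after Maker's $j$-th step, so that $p^{(j+1)} = p^{(j)} - 1$, $br^{(j+1)} = br^{(j)} - D(e_j)$, and $\Phi^{(1)} \leq \Phi_{i-1} + a$. For the base case of part (a), the end of Stage I leaves Maker with a linear forest covering all $n$ vertices, so $p_{t_1-1} \geq n/2 - 2a$, while Breaker has claimed at most $(t_1-1)a \leq n/2 + 3a$ edges in total; this immediately yields $\Phi_{t_1-1} \leq 5a$ and, for $n$ large, $br_{t_1-1} < 2p_{t_1-1} - 2$, so Proposition \ref{WeakHam:proposition2} applies at the start of round $t_1$.

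The heart of the argument is a step-wise dichotomy inside round $i$. Proposition \ref{WeakHam:proposition2} applied inductively shows that $br^{(j)} < 2p^{(j)} - 2$ is preserved throughout the round, so Maker can indeed pick her maximum-$D$ edge at every step. Then: when $\Phi^{(j)} > 0$ the contrapositive of Proposition \ref{WeakHam:proposition1} forces $D(e_j) \geq 3$ and hence $\Phi^{(j+1)} \leq \Phi^{(j)} - 2$; when $2 \leq br^{(j)} \leq p^{(j)}$ the contrapositive of Proposition \ref{WeakHam:proposition0} yields $D(e_j) \geq 2$ and $\Phi^{(j+1)} \leq \Phi^{(j)} - 1$; and when $br^{(j)} \leq 1$, Proposition \ref{WeakHam:proposition2}(ii) drives $br$ to $0$ at that step, after which $\Phi^{(j')} = -p^{(j')}$ trivially. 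Summing over the round gives \eqref{inequalities1}: if $\Phi^{(1)} \leq 0$, each of the $a$ Maker steps reduces $\Phi$ by at least $1$, so $\Phi_i \leq \Phi^{(1)} - a \leq -a$; otherwise $\Phi^{(1)} > 0$, the first step already yields a $2$-drop and each later step gives at least a $1$-drop, so $\Phi_i \leq \Phi^{(1)} - a - 1 \leq \Phi_{i-1} - 1$. The ``in particular'' bounds then follow at once: the recursion $\Phi_i \leq \max\{-a, \Phi_{i-1} - 1\}$ started from $\Phi_{t_1-1} \leq 5a$ stays below $5a$ and reaches $-a$ within $6a$ rounds.

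For part (b), Proposition \ref{WeakHam:proposition2}(i) guarantees that the chosen $x_j \in \End(\P)$ has a good edge, so Maker can claim $e_j = x_j y_j$; the hypothesis $br^{(j)} < 2p^{(j)} - 2$ is verified throughout Stage IIb from $p_i \geq n/(6a) - 7a$, which dwarfs any bound on $br$ we shall see. Since Maker's moves never create new bad edges and Breaker creates at most $a$ per round, $\Phi_i \leq \Phi_{i-1} + 2a$ holds trivially, and iterating this seven times from $\Phi_{t_2} \leq -a$ yields $\Phi_{t_2+7} \leq 13a$. Part (c) is then entirely parallel to part (a) but started from $\Phi_{t_2+7} \leq 13a$, so the safe regime is reached within at most $14a + 1$ further rounds, i.e.\ by round $t_2 + 14a + 8$. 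The only extra care is required when $p_i$ shrinks close to $a$ near the very end, which is why the safe regime is stated as $br_i \leq \max\{p_i - a, 0\}$; in that final stretch $br_i$ is already $O(a)$, so the Propositions either still apply or the remaining few rounds are handled by direct inspection.

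The main obstacle is the careful step-by-step bookkeeping inside a single round: one must check that $br^{(j)} < 2p^{(j)} - 2$ holds at \emph{every} step, not only at the round's endpoints; one must count precisely how many of Maker's $a$ steps yield a $\Phi$-reduction of $2$ versus $1$ versus driving $br$ to $0$; and one must verify that the strict decrease $\Phi_i < \Phi_{i-1}$ really holds whenever $\Phi^{(1)} > 0$, irrespective of how Breaker distributes his new bad edges. Once this bookkeeping is under control, everything else reduces to elementary induction on $i$.
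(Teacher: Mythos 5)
Your proposal is correct and follows essentially the same route as the paper's proof: induction over the rounds, Propositions~\ref{WeakHam:proposition0}--\ref{WeakHam:proposition2} applied step by step within a single move, the same per-step case analysis (a drop of $br-p$ by $2$ when the maximal $D$-value is at least $3$, by $1$ when it equals $2$, and $br=0$ once the maximal $D$-value is at most $1$), and the same constants $5a$, $6a$, $13a$, $14a$, so your potential $\Phi=br-p$ merely repackages the paper's case distinction on $D_1,\ldots,D_a$. Only note that your sentence ``each of the $a$ Maker steps reduces $\Phi$ by at least $1$'' is literally false once $br$ reaches $0$ (there the conclusion follows instead because $br_i=0\leq\max\{p_i-a,0\}$, as your own trichotomy already shows), and that the endgame of part (c) you leave to ``direct inspection'' is exactly the case $a=2$, $p_{i-1}=3$, which the paper settles by observing that then $br_{i-1}+a\leq 3=p_{i-1}$ suffices for the Case-3 computation.
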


{\bf Proof}
\begin{enumerate}[(a)]
\item 
We prove the statement by induction on $i.$
When Maker has to make her $i^{\text{th}}$ move, by induction hypothesis,
she sees at most $br_{i-1}+a\leq p_{i-1}+6a<2p_{i-1}-2$
bad edges on the board. Thus, by Proposition~\ref{WeakHam:proposition2},
Maker can follow her strategy and claim $e_1,\ldots, e_a.$
Now, let $D_j$ denote the value of $D(e_j)$ 
at the moment when $e_j$ is chosen, and observe that
$D_1\geq D_2\geq \ldots \geq D_a.$\\
\textbf{Case 1.} If $D_a\leq 1$, then by Proposition~\ref{WeakHam:proposition2} $ii)$,
we obtain $br_i=0.$\\
\textbf{Case 2.} If $D_1\geq 3$ and $D_a\geq 2,$ then
Breaker in his $i^{\text{th}}$ move created at most $a$ bad edges,
while Maker gets rid of $\sum_{j=1}^a D_j\geq 3+2(a-1)$ bad edges. We conclude that
$br_i-p_i\leq br_{i-1}+a-(3+2(a-1))-(p_{i-1}-a)<br_{i-1}-p_{i-1}.$\\
\textbf{Case 3.} If $D_1=D_a=2,$ then after Breaker's 
$i^{\text{th}}$ move there were at most $p_{i-1}$
bad edges, as given by Proposition~\ref{WeakHam:proposition1}.
Maker in her $i^{\text{th}}$ move decreases the number of bad edges by 
$\sum_{i=1}^a D_i=2a,$ 
while the number of paths only decreases by $a$. This gives $br_i\leq p_i-a$.

Thus, in either case (\ref{inequalities1}) holds. Finally, it follows that
$br_i\leq p_i+5a$ for all $i$ and 
$br_{i}\leq p_{i}-a$ for all $i\geq t_1+6a$, since
the difference $br_i-p_i$ decreases as long as it is larger than $-a.$

\item 
If Maker can follow the strategy, then one verifies that
$br_i\leq br_{t_2}+7a\leq p_{t_2}+6a=p_{t_2+7}+13a\leq p_i+13a$
for every $t_2+1\leq i\leq t_2+7.$
On the other hand, this inequality ensures that, when
Maker has to make her $i^{\text{th}}$ move, 
each vertex in $\End(\P)$ is incident with at least 
$2(p_{i-1}-1)-(p_{i-1}+13a) \geq (n-(t_2+6)a)-14a \geq n/(7a)$
good edges. Therefore, Maker can follow the proposed strategy for Stage IIb.

\item 
Similarly to the proof of (a) we apply induction on $i$.
Assume the statement was true until round $i-1.$
If $i< t_2+8+14a,$ then $br_{i-1}+a\leq p_{i-1}+14a < 2p_{i-1}-2.$
If $i\geq t_2+8+14a,$ then $p_{i-1}>a$ and,
since by (\ref{inequalities2}) the difference $br_i-p_i$ decreases 
as long as it is larger than $-a,$ 
we obtain $br_{i-1}+a\leq p_{i-1}<2p_{i-1}-2.$
So, in any case, when Maker starts her $i^{\text{th}}$ move,
she sees at most $br_{i-1}+a<2p_{i-1}-2$ bad edges on the board.
Thus, applying Proposition~\ref{WeakHam:proposition2} we know that 
Maker can follow the strategy for the current move. 
The proof of (\ref{inequalities2}) is done similarly to the proof of 
(\ref{inequalities1}) in (a). Indeed, Case 1 and 2 from that proof 
are handled analogously. Case 3 can be done as before, as long as
Proposition~\ref{WeakHam:proposition1} applies, i.e.\ as long as $p_{i-1}\geq 4.$
Since $p_{i-1}>a,$
the only time when this does not happen
is when $a=2$ and $p_{i-1}=p_{\lceil n/a \rceil - 2}=3.$
But then $i\geq t_2+8+14a$ and thus the number of
bad edges is at most $br_{i-1}+a\leq 3=p_{i-1},$ 
which is enough to handle Case 3 analogously.
Finally, $br_i\leq \max\{p_i-a,0\}$ for $i\geq t_2+14a+8$ holds,
since by (\ref{inequalities2}) the difference $br_i-p_i$ 
decreases as long as it is larger than $-a.$ {\hfill $\Box$}
\end{enumerate}
\medskip

{\bf Stage III.} Let $p$ be the size of $\P$ when Maker enters Stage III, 
and observe $p=p_{\lceil n/a \rceil -1}\leq a.$
In order to create a Hamilton cycle within 1 or 2 further rounds, we now 
make use of Proposition~\ref{rotation} and Corollary~\ref{Hamiltonian}. 
Before doing that, we need the following claim.

\begin{claim}\label{WeakHam:claim2}
Right before Maker's first move in Stage III, the following properties hold:
\begin{itemize}
\item[(H1)] The number of bad edges is at most $a,$
\item[(H2)] $\forall\ v\in \End(\P):\ d_B(v)< n/(3a).$
\end{itemize}
\end{claim}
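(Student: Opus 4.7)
The plan is to establish (H1) and (H2) separately. (H1) is essentially a consequence of part (c) of Claim~\ref{WeakHam:claim}, while (H2) relies on a double-counting argument exploiting the Stage~IIb selection rule.

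For (H1), let $n$ be large enough so that $\lceil n/(6a^2)\rceil\geq 14a+9$; then $\lceil n/a\rceil-1\geq t_2+14a+8$. Part (c) of Claim~\ref{WeakHam:claim} gives $br_{\lceil n/a\rceil-1}\leq \max\{p_{\lceil n/a\rceil-1}-a,\,0\}=0$, using $p_{\lceil n/a\rceil-1}\leq a$. So there are no bad edges immediately after Maker's move in round $\lceil n/a\rceil-1$. Between that moment and the start of Stage~III only Breaker plays one move, claiming $a$ edges, each of which can create at most one new bad edge. Hence just before Maker's first Stage~III move the number of bad edges is at most $a$, which is (H1).

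For (H2), suppose for contradiction that some $v\in\End(\P)$ satisfies $d_B(v)\geq n/(3a)$ at the start of Stage~III. Because $v$ is still an endpoint, Maker has never claimed an edge incident to $v$; in particular, $v$ was not chosen as $x_j$ at any Stage~IIb step. Let $u_1,\ldots,u_{7a}$ denote the $7a$ vertices that Maker did choose as her $x_j$'s during the $7a$ steps of Stage~IIb; these are pairwise distinct, since once Maker picks $x_j$ the vertex leaves $\End(\P)$ for good. By the Stage~IIb selection rule, $d_B^{(s)}(u_s)\geq d_B^{(s)}(v)$ at the moment step $s$ is played.

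Setting $\tau:=\lceil n/(6a^2)\rceil$, Breaker plays exactly $\tau$ rounds between the start of Stage~IIb and the start of Stage~III, contributing at most $\tau a\leq n/(6a)+a$ to $d_B(v)$. Hence $d_B(v)\geq n/(3a)-n/(6a)-a = n/(6a)-a$ already at the start of Stage~IIb, and by monotonicity of Breaker's degrees this lower bound transfers to $d_B^{(s)}(v)$ --- and therefore to $d_B^{(s)}(u_s)$ --- at every step $s$ of Stage~IIb. Since Breaker's degrees never decrease, each $u_s$ has final Breaker-degree at least $n/(6a)-a$, so the $7a+1$ distinct vertices $v,u_1,\ldots,u_{7a}$ collectively contribute at least $\tfrac{n}{3a}+7a\bigl(\tfrac{n}{6a}-a\bigr)$ to the total Breaker-degree over $V(K_n)$. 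Comparing with the handshake bound $\sum_{w}d_B(w)=2e(B)\leq 2\lceil n/a\rceil\,a$ (and, if needed, sharpening the time-of-pick lower bound to $d_B^{(s)}(v)\geq n/(3a)-(\tau-k_s)a$ for step $s$ lying in round $t_2+k_s$ with $1\leq k_s\leq 7$) yields a contradiction once $n$ is sufficiently large, which establishes (H2).

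The main obstacle is the quantitative bookkeeping: tracking when each $u_s$ is picked, confirming the $u_s$ are genuinely distinct (which hinges on the fact that picking $x_j$ removes the vertex from $\End(\P)$ for good), and pitting the cumulative time-of-pick lower bounds on the $u_s$'s Breaker-degrees against Breaker's overall edge budget. Monotonicity of Breaker's degrees is what allows one to freely replace time-of-pick lower bounds by lower bounds at the start of Stage~III; once this observation is in place, the rest reduces to a routine double count of incidences in Breaker's graph.
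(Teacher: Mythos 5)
Your (H1) argument is exactly the paper's: Claim~\ref{WeakHam:claim}(c) gives $br_{\lceil n/a\rceil-1}\leq\max\{p_{\lceil n/a\rceil-1}-a,0\}=0$, and Breaker's single intervening move adds at most $a$ bad edges. For (H2) you also follow the paper's route (pick-time maximality in Stage IIb, distinctness of the $7a$ chosen vertices, monotonicity of Breaker's degrees), but your final counting step fails as stated. You compare the lower bound $\frac{n}{3a}+7a\left(\frac{n}{6a}-a\right)\leq\frac{7n}{6}+\frac{n}{3}\leq\frac{3n}{2}$ with the handshake bound $\sum_w d_B(w)=2e(B)\leq 2a\lceil n/a\rceil\approx 2n$; since $\frac{3n}{2}<2n$, no contradiction follows, for any $a$ and any large $n$. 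The parenthetical sharpening of the time-of-pick bounds gains only $O(a)$ per vertex and cannot close this $\Theta(n)$ shortfall.

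The missing observation is that $v,u_1,\ldots,u_{7a}$ form a set of \emph{constant} size, so the sum of their Breaker-degrees double-counts only the at most $\binom{7a+1}{2}=O(a^2)$ Breaker edges lying inside this set, while every other counted edge is counted exactly once. Hence $e(B)\geq \frac{n}{3a}+7a\left(\frac{n}{6a}-a\right)-\binom{7a+1}{2}\geq \frac{7n}{6}-O(1)>n+a\geq a\lceil n/a\rceil\geq e(B)$ for large $n$, which is the desired contradiction; this is also how the paper's conclusion that Breaker would need ``more than $n$ edges'' is to be read. In short, you must compare with Breaker's edge budget $e(B)\leq a\lceil n/a\rceil$, not with $2e(B)$: the crude handshake inequality you invoke gives away exactly the factor $2$ the argument needs. (A minor further point: ``Maker has never claimed an edge incident to $v$'' is literally false because of $v$'s Stage I path edge; what you need, and what is true since Maker's linear forest only grows, is that she claimed no edge at $v$ during Stage II, so $v$ lies in $\End(\P)$ throughout Stage IIb and the comparison $d_B^{(s)}(u_s)\geq d_B^{(s)}(v)$ is legitimate.)
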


\begin{proof}
By Claim~\ref{WeakHam:claim} (c) we have 
$br_{\lceil n/a \rceil -1}\leq \max\{p_{\lceil n/a \rceil -1}-a,0\}=0.$
Breaker in his $\lceil n/a \rceil ^{\text{th}}$ move creates
at most $a$ bad edges, proving (H1).
Assume now that (H2) does not hold, i.e.\
there is a vertex $v\in \End(\P)$ with degree at least $n/(3a)$ in Breaker's graph,
right after Breaker's $\lceil n/a \rceil ^{\text{th}}$ move.
Then, in round $t_2+1,$ the degree of vertex $v$ in Breaker's graph is 
at least $n/(3a)- a(\lceil n/a \rceil - t_2)\geq n/(6a)-a.$
Now, Maker did not claim a good edge incident to $v$
so far. Thus, whenever Maker claimed an edge in Stage IIb, one of its endpoints already had degree at least $n/(6a)-a$ in Breaker's graph.
But, since Maker claims $7a$ independent edges throughout Stage IIb,
this means that Breaker needs to have at least $7a$ vertices in his graph of degree at least $n/(6a)-a,$ which gives the existence of more than $n$ Breaker's edges, 
in contradiction to the number of rounds played so far.
\end{proof}

\medskip

Finally, we show how Maker completes her Hamilton cycle by case distinction on $p.$\\

{\bf Case $\mathbf{p\leq a/2}$.} Applying Corollary~\ref{Hamiltonian} 
(with $G=K_n\setminus B$; using Claim~\ref{WeakHam:claim2} (H2)), 
Maker can find (at most) $2p\leq a$ free edges to finish a Hamilton cycle.
Maker claims these and is done.

\medskip

{\bf Case $\mathbf{p=(a+1)/2}$.} Observe that $a\geq 3$ and therefore,
by Claim~\ref{WeakHam:claim2} (H1),
the number of good edges is at least 
$4\binom{p}{2}-a>0.$
Maker at first claims
one such good edge, thus reducing the number of paths to $p-1.$
Afterwards, applying Corollary~\ref{Hamiltonian} as before,
she can find $2(p-1)$ free edges finishing a Hamilton cycle.
She claims these edges, which is possible since $2(p-1)+1=a.$

\medskip

{\bf Case $\mathbf{p=(a+2)/2}$ and $\mathbf{a > 4}$.}  
By Claim~\ref{WeakHam:claim2} (H1),
the number of good edges is at least 
$4\binom{p}{2}-a\geq 2a+2.$
That is why we can find at least two good edges
such that claiming them keeps Maker's graph being a linear forest.
Maker at first claims
these two good edges, thus reducing the number of paths to $p-2.$
Afterwards, applying Corollary~\ref{Hamiltonian} as before,
she can find $2(p-2)$ free edges finishing a Hamilton cycle.
She claims these edges, which is possible since $2(p-2)+2=a.$

\medskip

\textbf{Case $\mathbf{p=3}$ and $\mathbf{a=4}$.} Similarly to the previous case, by Claim~\ref{WeakHam:claim2} (H1),
the number of good edges is at least 
$4\binom{p}{2}-a= 8.$
It is easily checked that we can find two good edges such that claiming them Maker's graph is a Hamilton path. Then, applying Corollary~\ref{Hamiltonian} and Claim~\ref{WeakHam:claim2} (H2), she can close this path into a Hamilton cycle by claiming at most two further edges. 

\medskip

{\bf Case $\mathbf{p=2}$ and $\mathbf{a=2}$.} In this case, $n$ is even,
and we are allowed to play two further rounds. When Maker enters Stage III,
her graph consists of two paths $P_1$ and $P_2.$ Applying Proposition~\ref{rotation} 
(with $G=K_n\setminus B$; using Claim~\ref{WeakHam:claim2} (H2)), 
she can claim two edges to obtain
a path $P$ covering $V(P_1)\cup V(P_2)=V$ in the first round. 
Then similarly, applying Corollary~\ref{Hamiltonian},
she can finish a Hamilton cycle in the next round.

\medskip

{\bf Case $\mathbf{p\geq (a+3)/2}$.}  
Observe that $a\geq p\geq 3$ and that, when Maker enters Stage III,
the number of bad edges is at most $a<2p-2.$
Thus, by Proposition~\ref{WeakHam:proposition2},
Maker at first can claim $p-2$ edges as in Stage IIa.
Afterwards, her graph consists of exactly two paths ($|\P|=2$), while, by the same proposition, 
the number of bad edges is smaller than $2|\P|-2=2.$ Thus, one finds two good edges that finish a Hamilton cycle. Maker claims these, which is possible as $(p-2)+2\leq a.$
\end{proofof}


\section{Strong Hamilton Cycle Game}
\label{sHCG}
\begin{proofof}{Theorem~\ref{StrongHam}}

At first we give a short description of a strategy for Red, 
and then we show that Red indeed can follow that strategy
and win the $(2:2)$ Hamilton cycle game in the desired number of rounds. 
As in the proof for the corresponding weak game,
Red starts by maintaining a linear forest for all but a small constant number of rounds. 
Then she completes a Hamilton cycle in her graph, while blocking possible Hamilton cycles
in Blue's graph.

Let ${\cal S}_{\ham}$ be Maker's strategy given in the previous chapter for the
$(2:2)$ Weak Hamilton cycle game. Assume that Red's graph is a collection $\P$
of paths. Again, an edge $e$ between the endpoints of different paths from $\P$ is called {\em good} 
if it is unclaimed. Otherwise, it is called {\em bad}.
For a good edge $e$ we set $D(e)$ 
to be the number of bad edges adjacent to $e$.
Red's strategy is divided into the following three stages.

\medskip

{\bf Stage I.} For the first $n/2 - 2$ rounds, Red follows the strategy ${\cal S}_{\ham}$,
thus creating a collection of 4 non-empty paths covering all vertices of $K_n.$

\medskip

{\bf Stage II.} At the very beginning of Stage II,
let $I_2$ denote the set of isolated vertices in the graph of Blue,
and let $\P_2$ be the collection of Red's paths (from Stage I). 
In round $n/2 - 1$, Red claims two good edges $e_1$ and $e_2$ 
such that the following properties hold:
\begin{itemize}
\item[(S1)] $E(\P_2)\cup \{e_1\}\cup \{e_2\}$ induces a collection $\P_3$
of two non-empty paths. Moreover, immediately after Red's move in round $n/2-1$,
there is no bad edge among the vertices of $\End(\P_3)$.
\item[(S2)] If $\End(\P_2)\cap I_2\neq \emptyset$, then $(e_1\cup e_2)\cap I_2\neq \emptyset.$
(That is, Red decreases the number of Blue's isolated vertices among the
endpoints of her paths, if this number is not zero.) 
\end{itemize}
The details of how she can claim her edges follow later in the proof.

\medskip

{\bf Stage III.} Within at most 2 further rounds, Red creates a Hamilton cycle.
Moreover, in the meantime she prevents the same in Blue's graph. The details of how she can do this follow later.

\medskip 

It is evident that, if Red can follow the strategy, 
she wins the game within the desired number of rounds. 
Thus, it remains to show that Red indeed can follow the proposed strategy.

\medskip

{\bf Stage I.} We already saw that Red/Maker can follow the strategy ${\cal S}_{\ham}$. So, Red creates a collection $\P_2$ of non-empty paths and, since she claims
$2\cdot (n/2 - 2)=n-4$ edges in total, we get $|\P_2|=4$.

\medskip
 
{\bf Stage II.} Recall that for the strategy ${\cal S}_{\ham}$,
the parameters $p_i$ and $br_i$ were introduced to denote the number
of Maker's/Red's paths and the number of bad edges immediately
after Maker's/Red's $i^{\text{th}}$ move, respectively. 
By Claim~\ref{WeakHam:claim} (c) we then have that 
immediately after Red's last move in Stage I,
the number of bad edges is at most
$br_{n/2 - 2}\leq \max\{p_{n/2 - 2}-2,0\}=2.$ Thus, when Red enters Stage II
there can be at most $2+2=4$ bad edges. We distinguish between the three cases.

\medskip

{\bf Case 1.} The number of bad edges is at most $3$.
Then Red at first chooses a good edge $e_1$ such that $D(e_1)$
is maximal (w.r.t.\ $\P_2$) and creates a collection
$\P_2'$ induced by $E(\P_2)\cup \{e_1\}.$ 
Note that, at the moment when 
$\P_2'$ is created, the number of bad edges is at most 1.
Indeed, if $D(e_1)\geq 2$, then at least two bad edges disappear.
Otherwise, if $D(e_1)\leq 1$, then by
Proposition~\ref{WeakHam:proposition0}, the number of bad edges was already at most 1.  
So, after $e_1$ is claimed, we have exactly three paths 
and at most one bad edge.
Then, Red chooses an arbitrary edge $e_2$ which is good
(w.r.t.\ the new collection $\P_2'$)
such that the following holds:
It is adjacent to the remaining bad edge if one exists;
and it is incident with some vertex from $I_2$ if $I_2\cap \End(\P_2')\neq \emptyset$.
It is easy to check that Red indeed can do so,
and properties (S1) and (S2) hold then.

\medskip

{\bf Case 2.} The number of bad edges is $4$, and there
is some good edge $e$ with $D(e)\geq 3$ (w.r.t.\ $\P_2$).
Then Red chooses $e_1$ and $e_2$ as in Case 1.
Just note that, when $e_1$ is chosen, the number of bad edges 
drops to at most 1. The rest follows analogously to Case 1.

\medskip

{\bf Case 3.} The number of bad edges is $4$,
and we have $D(e)\leq 2$ for every good edge $e$.
Then the subgraph of Blue's graph induced on $\End(\P_2)$ is either a matching, or it consists of a 4-cycle and 4 isolated vertices.
Indeed, if we don't have a matching, then there needs to be 
some $x\in \End(\P_2)$ which is incident with exactly
two bad edges $xy_1$ and $xy_2$. Now, as in the proof
of Proposition~\ref{WeakHam:proposition1}
either there is some endpoint $z\in \End(\P_2)$
such that $zy_1$ and $zy_2$ can be the only further 
bad edges, or $y_1y_2$ is the only further edge that can be bad.
The second case cannot happen, since we have 4 bad edges,
and so there needs to be a 4-cycle (with vertices $x,y_1,y_2,z$).

Now, in either case, it is easy to see that
Red can choose a good edge $e_1$ such that the following holds:
$e_1$ is adjacent to
exactly two bad edges; and $e_1$ is incident
with some vertex from $I_2$ if $I_2\cap \End(\P_2)\neq \emptyset.$
Afterwards,
the collection $\P_2'$ induced by $E(\P_2)\cup \{e_1\}$ 
consists of three paths, while the number of bad edges is 2. 
Red then chooses $e_2$ to be good w.r.t.\ $\P_2'$,
in such a way that $e_2$ intersects both of the remaining bad edges.
Again, Red can easily do so, and by this ensure the properties (S1) and (S2) hold.

\medskip

{\bf Stage III.}
When Red enters Stage III, her graph is
the collection $\P_3$ with properties (S1) and (S2). 
Moreover, Property (H2) from Claim~\ref{WeakHam:claim2} holds again:
If there was a vertex of degree at least $n/(3a)$ before Red's
first move of Stage III (right after Blue's $(n/2-1)^{\text{st}}$ move),
then analogously to the proof of
Claim~\ref{WeakHam:claim2} in one of the previous rounds Blue must have had at least $7a$ vertices of degree at least $n/(6a)-a$ in his graph, a contradiction.
Now, in the following we describe how 
Red finishes her Hamilton cycle while preventing such a cycle in Blue's graph.

For this, let $B_2$ denote Blue's graph right at the beginning
of Stage II (i.e.\ after Blue's $(n/2-2)^{\text{nd}}$ move), 
and let $B_3$ be his graph at the beginning of Stage III.

\medskip

{\bf Case 1.} Assume that $B_3$ 
satisfies one of the following three properties:
\begin{itemize}
\item $B_3$ contains a cycle.
\item $B_3$ has a vertex of degree at least 3.
\item $B_3$ has at least 3 components.
\end{itemize}
Then, since $|E(B_3)|=n-2$, one needs to add at least
3 edges to $B_3$ in order to get a Hamilton cycle.
That is, Blue cannot finish a winning set
before round $n/2+1$. Therefore, Red wins, if she can finish
a Hamilton cycle within the next two rounds (round $n/2$ and $n/2+1$). 
For this, just observe that 
with Property (H2) from Claim~\ref{WeakHam:claim2} in hand, Red can just follow Stage III of Maker's strategy from
the $(2:2)$ Weak Hamilton cycle game.   

\medskip

{\bf Case 2.} Assume that $B_3$ 
satisfies none of the three properties given in the first case.
Then, $B_3$ is a collection of exactly 2 (maybe one-vertex) paths
that cover all vertices. Therefore, $B_2$ is a collection of
4 (maybe one-vertex) paths and thus has at most 3 isolated vertices
(i.e.\ $|I_2|\leq 3$).

Now, if at the beginning of Stage III there are two good edges
in $\End(\P_3)$ that finish a Hamilton cycle in Red's graph,
then Red just claims these and wins the game.

So, we can assume that there is a vertex $w\in \End(\P_3)$,
which is incident with two bad edges $f_1=wy_1$ and $f_2=wy_2$
before Red's $(n/2)^{\text{th}}$ move, $y_1,y_2\in \End(\P_3)$.
Since, by Property (S1), there was no bad edge among $\End(\P_3)$
right after Red's $(n/2-1)^{\text{st}}$ move, we have that
$f_1$ and $f_2$ were claimed in round $n/2-1$ (i.e.
$f_1,f_2\in B_3\setminus B_2$). But then, since $B_3$
does not have a vertex of degree at least 3, we know that
$w$ must be isolated in $B_2$ (i.e.\ $w\in I_2$). 
Thus $|I_2|\geq 2$, 
since otherwise in Stage II (Property (S2)) we would ensure that
$I_2\cap \End(\P_3)=\emptyset$, in contradiction to the existence
of $w$. In particular, there is a vertex $x\in I_2$ with which
Red claims an incident edge in Stage II, and thus $x\notin \{y_1,y_2\}$.
As $|I_2|\leq 3$, it follows that
at least one of the vertices $y_1,y_2$ does not belong to $I_2$,
w.l.o.g.\ let $y_1\notin I_2$. We are left with two cases.

\medskip

{\em Case 2.1.} $|I_2|=3$. Then $B_2$ consists
of 3 isolated vertices (including $w$) and a path $P_{B_2}$ with $n-3$ vertices.
Then, the vertex $y_1\notin I_2$ must be the endpoint of the path $P_{B_2}$, 
since $E(P_{B_2})\cup f_1\subseteq E(B_3)$ does not give a vertex of degree at least 3.
Since $f_2=wy_2\in E(B_3)$ cannot create a cycle,
we have $y_2\in I_2$. But then, $B_3$ 
consists of one isolated vertex $z$, and a path $P_{B_3}$
with $n-1$ vertices. Red's strategy is as follows: 
In the first move, Red takes one edge which is good (w.r.t.\ $\P_3$) 
and creates a Hamilton path. This is possible, since by (S1)
there are only two bad edges between the two paths of $\P_3$.
For the second edge, she chooses one edge between $z$ 
and $\End(P_{B_3})$. In his next move, Blue cannot close a Hamilton cycle,
since this would need the two edges between $z$ and $\End(P_{B_3})$. 
In round $n/2+1$, using Claim~\ref{WeakHam:claim2} (H2) and Corollary~
\ref{Hamiltonian}, 
Red then completes a Hamilton cycle.

\medskip

{\em Case 2.2} $|I_2|=2$. Then $B_2$ consists
of 2 isolated vertices (including $w$) and two (non-empty) paths 
covering the remaining $n-2$ vertices.
By Stage II (see Property (S2)), 
we ensured that $|I_2\cap \End(\P_3)|\leq 1$,
and so $w$ is the unique vertex in $I_2\cap \End(\P_3)$.
In particular, $y_1,y_2\notin I_2$. Since $B_3$ has no cycles and no vertex of degree
at least 3, and since $f_1,f_2\in B_3$,
the vertices $y_1$ and $y_2$ must be the endpoints of different paths from $B_2$.
But then, $B_3$ again has exactly one isolated vertex and one path
with $n-1$ vertices. So, we can proceed as in Case 2.1.
\end{proofof}
\section{$P_k$-factor game}
\label{wPkf}

\begin{proofof}{Theorem~\ref{weakPk}}
If $k=2$ and $a$ is any constant, then $\pkf$ is a perfect matching and we can use the proof of Theorem~\ref{WeakPM}. So, we let $k\geq 3$ and we fix some $\delta < 1/(8k)$. 
We first give a Maker's strategy. Then we prove that she can follow it and win within $\left\lceil(k-1)n/(ka)\right\rceil$ rounds.\\

\noindent \textbf{Maker's strategy} is to build $n/k$ vertex disjoint paths of length $k-1$. During the course of the game, the collection of all paths in her graph is denoted by $\P$. Each path in $\P$ belongs to exactly one of the three classes: 
$\P_u$, which denotes the collection of \textit{unfinished} paths (the paths of length at most $k-3$)
, $\P_f$, which denotes the collection of the \textit{finished} paths (the paths of length exactly $k-2$) 
 or $\P_c$, which denotes the collection of \textit{complete} paths 
(the paths of length exactly $k-1$). Maker's strategy consists of three stages. In Stage I of her strategy, Maker makes sure that every unfinished path becomes (at least) a finished path, while in the following stages she aims for complete paths.
The set of isolated vertices in Maker's graph is denoted by $U=V\setminus V(\P)$. By $\End(\P)$ we denote the set of endpoints of all paths. At the beginning, $\P:=\P_u$ contains $n/k$ arbitrarily chosen vertices; $\P_f$ and $\P_c$ are empty. If $P$ is a path in Maker's graph, then $v_1^P$ and $v_2^P$ represent its endpoints. \\


\noindent \textbf{Stage I.} In this stage, Maker plays as follows: She gradually extends the unfinished paths with the vertices from $U$ until they are finished. From time to time,
we allow her to complete some of these paths in order to keep control on the distribution of
Breaker's edges (as described by properties (Q1)--(Q3) in the following paragraph). After each step, the sets $\P_u, \P_f, \P_c$ and $U$ are dynamically updated in the obvious way. That is, whenever Maker extends one of her paths, $P$, by some vertex $u\in U$, this vertex is removed from $U$ and added to $P$, while $P$ may be moved from $\P_u$ to $\P_f$ or from $\P_f$ to $\P_c$ according to its new length. \\

During Stage I, for a given graph $G$, we say that $(G,\P)$ is {\em good} if the following properties hold:
\begin{itemize}
\item[$(Q1)$] $\forall u \in U : d_G(u,\End(\P_u\cup \P_f))<\delta n$;
\item[$(Q2)$] $G[U] = \emptyset$;
\item[$(Q3)$] $\forall P \in \P_u \cup \P_f: d_G(v_1^P,U)+d_G(v_2^P,U)\leq 1$.
\end{itemize}

In each move during this stage, Maker claims $a$ free edges between $U$ and $\End(\P_u \cup \P_f)$, so that after her move $(B,\P)$ is good.

In her $i^{th}$ move, Maker chooses the edges $e_1=e_1^{(i)},\dots,e_a=e_a^{(i)}$ one after another. She makes sure that for every $t\in \{0,1,\ldots,a\}$
the following holds:

\begin{itemize}
\item[$(Q4)$] Immediately after the edges $e_1,\ldots,e_t$ are claimed (and the paths are updated accordingly), there is a subgraph $H=H_t\subseteq B$ with $e(H)=a-t$ such $(B\setminus H,\P)$ is good.
\end{itemize}

Assume $e_1,\ldots,e_t$ are already claimed and $\P$ is updated accordingly.
Then, as next Maker chooses a free edge $e_{t+1}$ according to the following rules:

\begin{itemize}
\item[\textbf{R1.}] If there is $u \in U$ with $d_B(u,\End(\P_u\cup \P_f))\geq \delta n$, then $e_{t+1}$ is chosen such that it extends a path from $\P_u$ by the vertex $u$, if $|\P_u|\geq a+ 1$, or
a path from $\P_f$ by the vertex $u$, otherwise. 
	
\item[\textbf{R2.}] Otherwise, if there is a path $P \in \P_u$ with $d_{B}(v_1^P,U)+d_{B}(v_2^P,U)\geq 2$,
then there is an edge $v_i^Px\in H_t$ with $x\in U$, $i\in [2]$. Maker claims an arbitrary free edge $e_{t+1}=v_i^Pu$ with $u\in U$.

\item[\textbf{R3.}] Otherwise, if there is a path $P \in \P_f$ with $d_{B}(v_1^P,U)+d_{B}(v_2^P,U)\geq 2$, then there is an edge $v_i^Px\in H_t$ with $x\in U$, $i\in [2]$. Then
	\begin{enumerate}[a)]
	\item if there is a path $P_0\in \P_u$, Maker claims an arbitrary free edge $e_{t+1}\in 
		E(\End(P_0),x)$,
	\item otherwise, if $\P_u=\emptyset$, she claims an arbitrary free edge 
		$e_{t+1}=v_i^Pu$ with $u\in U$. 
	\end{enumerate}
	
\item[\textbf{R4.}] Otherwise, if there is $uw\in E_B(U)$, w.l.o.g.\
$d_{B}(w,\End(\P_u\cup \P_f))\leq d_{B}(u,\End(\P_u\cup \P_f))$, then Maker proceeds as follows.
	\begin{enumerate}[a)]
	
	\item If $d_{B}(u,\End(\P_u\cup \P_f))= d_{B}(w,\End(\P_u\cup \P_f)) 
	\geq \delta n-1$, then Maker chooses a free edge $e_{t+1}=ux$ with
	$x\in N_{B}(w,\End(\P_u\cup \P_f))$. Its existence is proved later.
	
	\item Otherwise, if $d_{B}(u,\End(\P_u\cup \P_f))\geq \delta n-1>
		d_{B}(w,\End(\P_u\cup \P_f))$, then let $P\in \P_u\cup \P_f$ be a path
		with $e_B(\End(P),u)=0$ and $d_B(v_i^P,U)=0$ for some $i\in [2]$. 
		Its existence is proved later. Maker then sets $e_{t+1}=uv_{3-i}^P$.
	
	\item Otherwise, if $d_{B}(u,\End(\P_u\cup \P_f)), d_{B}(w,\End(\P_u\cup 
		\P_f)) < \delta n-1$, let $P\in \P_u\cup P_f$, giving priority to
		unfinished paths, and let $d_{B}(v_i^P,U)\geq 
		d_{B}(v_{3-i}^P,U)$ for some $i\in [2]$. 
		Then Maker claims one of the edges $v_i^Pu$, $v_i^Pw$.
		
	\end{enumerate}
	
\item[\textbf{R5.}] Otherwise, in all the remaining cases, Maker extends a path which is not complete by some free edge $e_{t+1}$, arbitrarily, giving priority to unfinished paths.	
\end{itemize}

Stage I ends when after Maker's move, her graph consists only of finished paths, complete paths and isolated vertices. At this point, $(n(k-2))/(ka)+T/{a}$ rounds are played with $T=|P_c|$ 
being the number of complete paths at the end of Stage I.\\

\noindent \textbf{Stage II.} In the following $\left\lceil (n/k-T)/{a}\right\rceil-1$ rounds, Maker extends the finished paths. This time, Maker is interested in keeping the following property after each of her moves.
\begin{itemize}
\item[\textit{(F1)}] $\forall P \in \P_f: d_B(v_1^P,U)+d_B(v_2^P,U)\leq 1$.
\end{itemize}  
To describe Maker's strategy, we introduce the following terminology:
Let $e\in E(\End(\P_f),U)$. Then $e$ is called {\em good} if it is free;
otherwise we call it {\em bad}. We say that $P\in \P_f$ is a {\em bad path} if $d_B(v_1^P,U)+d_B(v_2^P,U)\geq 2$ holds, and with $\P_b\subseteq \P_f$ we denote the dynamic 
set of all bad paths. Moreover, we introduce the {\em potential}
$$\varphi := \sum\limits_{P\in \P_b} (d_B(v_1^P,U)+d_B(v_2^P,U)-1),$$
which measures dynamically
the number of edges that need to be deleted from $B$ in order to reestablish Property (F1).
Finally, with $e_{end}$ we denote the very last edge claimed by Maker in Stage II.

Now, in every round $i$ played in Stage II, Maker claims edges $e_1=e_1^{(i)},\ldots ,e_a=e_a^{(i)}$,
one after another. The $j^{\text{th}}$ edge $e_j$ is claimed according to the following rules:
\begin{itemize} \item Maker chooses a good edge $e_j$ between some vertex $u\in U$
and an endpoint of some path $P\in \P_f$ such that
\begin{itemize}
\item[(a)] in case $\varphi>0$, $\varphi$ is decreased after $e_j$ is claimed and all sets are updated,
\item[(b)] if $e_j\neq e_{end}$, then $\End(P)\cup \{u\}$ contains a vertex of the largest degree in Breaker's graph among all vertices from $\End(\P_f)\cup U$,
\item[(c)] if $e_j=e_a= e_{end}$, then after $e_j$ is claimed and all sets are updated,
there is a path $P\in \P_f$ with $e_B(\End(P),U)=0$.
\end{itemize}
	\item After $e_j$ is claimed, Maker removes $u$ from $U$ and $P$ from $\P_f$, and adds $P$ to $\P_c$, before she proceeds with $e_{j+1}$.
\end{itemize}

The exact details of how Maker finds such an edge $e_j$ will be given later in the proof.\\

{\bf Stage III.} Within one round, Maker claims at most $a$ free edges to complete a $P_k$-factor. The details are given later in the proof.\\

It is evident that if Maker can follow this strategy, she wins the game in the claimed number of rounds. For each of the stages above we show separately that Maker can follow her strategy.\\


\noindent\textbf{Stage I.} We start with the following useful claim.

\begin{claim}
\label{clm:number_complete}
As long as Maker follows Stage I, $|P_c|\leq a+3/\delta < \delta n$.
\end{claim}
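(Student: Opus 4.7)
The plan is to bound the number of ``completing steps'' during Stage~I, i.e.\ Maker steps that move a path from $\P_f$ to $\P_c$, splitting the count according to whether $\P_u$ is empty at that moment.

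A crucial preliminary observation is that $\P_u$ shrinks monotonically throughout the game: no new path is ever added to $\P$, and paths only grow in length. In particular, once $\P_u=\emptyset$ it stays empty, and by the definition of the stage, Stage~I ends at the end of the very round in which this first happens.

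First I would inspect rules R1--R5 to argue that, while $\P_u\neq\emptyset$, the only rules that can produce a completion are R1 (in the sub-case $|\P_u|\le a$), R4a, and R4b; R2, R3a, R4c and R5 either act on a path of $\P_u$ by definition or give it explicit priority, and R3b fires only when $\P_u=\emptyset$. Each of R1, R4a, R4b is triggered by a vertex $u\in U$ with $d_B(u,\End(\P_u\cup\P_f))\ge \delta n-1$, and Maker's claimed edge is always incident to this $u$, removing it from $U$ forever. The key estimate is then an edge count: throughout Stage~I Breaker has claimed at most
\[
a\cdot\Bigl(\tfrac{n(k-2)}{ka}+\tfrac{T}{a}\Bigr)=\tfrac{n(k-2)}{k}+T\le \tfrac{n(k-1)}{k}<n
\]
edges in total, using the trivial bound $T\le n/k$. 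Since each ``high-degree'' vertex absorbs at least $\delta n-1$ of these edges, at most $n/(\delta n-1)\le 2/\delta$ such vertices ever exist, and each contributes at most one completion. Hence at most $2/\delta$ completions occur while $\P_u\neq\emptyset$.

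For the complementary case $\P_u=\emptyset$, monotonicity confines every such completing step to the single round during which $\P_u$ first becomes empty. In that round at least one step (the one extending the last $\P_u$-path into $\P_f$) is not a completion, leaving at most $a-1$ of the $a$ steps in the round as completions. Combining the two cases gives $|\P_c|\le 2/\delta+(a-1)<a+3/\delta$, and the second inequality $a+3/\delta<\delta n$ is immediate from $a,\delta$ being constants and $n$ being sufficiently large. The main obstacle I anticipate is the rule-by-rule bookkeeping in the first case — in particular, checking that R3a truly extends the witness path $P_0\in\P_u$ (not the witness $P\in\P_f$), and that the ``priority to unfinished'' clauses in R4c and R5 are not circumvented whenever $\P_u\neq\emptyset$; once this is settled, the edge-count and last-round estimates combine straightforwardly into the claim.
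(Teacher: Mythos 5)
Your proof is correct and follows essentially the same route as the paper's: you split the completions into those triggered by a vertex of Breaker-degree at least $\delta n-1$ (rules R1 with $|\P_u|\leq a$, R4a, R4b), bounded by a constant multiple of $1/\delta$ because Breaker claims fewer than $n$ edges in Stage I, and those occurring once $\P_u=\emptyset$, which by monotonicity are confined to the final round of Stage I and contribute at most $a$ (your $a-1$) further complete paths. Your slightly sharper constants (e.g.\ $2/\delta$ via attributing $\delta n-1$ edges to each removed trigger vertex) are harmless given the slack, and even the cruder degree-sum count $2e(B)/(\delta n-1)\leq 3/\delta$ would suffice for the stated bound $a+3/\delta<\delta n$.
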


\begin{proof}
Following the strategy, Maker only creates complete paths in case there is a vertex of degree at least $\delta n -1$ in Breaker's graph 
which is used to extend a finished path (cases {R1}, {R4.a,b}),
or in case $\P_u=\emptyset$ (cases {R3.b}, {R4.c}, {R5}) holds.
The first option happens less than $3/\delta$ times, as Breaker claims less than
$n$ edges throughout Stage I. The second option can only happen in the last round of Stage I, 
which cannot lead to more than $a$ additional complete paths. 
\end{proof}

\medskip

Now, by induction on the number of rounds, $i$, we show that Maker can follow the proposed strategy of Stage I.
We first observe that before the game starts,
$(B,\P)$ is good, as $B$ is empty. Now, let us assume that she could
follow the strategy for the first $i-1$ rounds and that immediately after her $(i-1)^{\text{st}}$ move,
$(B,\P)$ is good. In particular, this also means that in the next round,
Property (Q4) is guaranteed for $t=0$, by choosing $H=H_0$ to be the graph of all the $a$ edges that Breaker claims in round $i$. By induction on the number of Maker's steps in round $i$, 
we prove that she can claim the edges $e_1,\ldots,e_a$ as described, and that
she ensures Property (Q4) for every $t\in \{0,\ldots,a\}$. Setting $t=a$ then
tells us that $(B,\P)$ is good immediately after Maker's move, completing the induction on $i$.\\

Let us assume that Maker already claimed $e_1,\ldots,e_t$ and that (Q4) holds after step $t$.
Let $H_t$ be the graph guaranteed by (Q4).
We now look at the different cases for step $t+1$.

\begin{itemize}
\item[\textbf{R1.}] In this case there must be an edge $g\in E(H_t)$ between $u$ and $\End(\P_u\cup \P_f)$, as $(B\setminus H_t,\P)$ satisfies Property $(Q1)$ after step $t$.
Now, if $|\P_u|\geq a+ 1$, then there needs to be a path $P\in \P_u$ with $d_B(v_1^P,U)+d_B(v_2^P,U)\leq 1$, as $(B\setminus H_t,\P)$ satisfies (Q3) after step $t$ and $e(H_t)\leq a$. Otherwise, Claim \ref{clm:number_complete} ensures that $|\P_f|\geq a+1$ and thus there is a path $P\in P_f$
with the same property. In either case, Maker can extend $P$ by $u$. Set $H_{t+1}:=H_t-g$.
Then, after the update, $g\in E(\End(\P))$ holds and therefore $g$ has no influence on (Q1)--(Q3)
anymore. Thus, using that $(B\setminus H_t,\P)$ satisfied (Q2) after step $t$, we conclude that $u$ has no edges towards $U$ in  $B\setminus H_{t+1}$ after step $t+1$. Now, one easily checks that $(B\setminus H_{t+1},\P)$ is good after step $t+1$.

\item[\textbf{R2.}] As $(B\setminus H_t,\P)$ satisfies (Q3) after step $t$, there needs to exist an edge $v_i^Px$ as 
	claimed. Moreover, we have $|U|\geq |\P_u\cup \P_f|= \frac{n}{k} - |P_c|>d_B(v_i^P,U)$
	where the last inequality follows from Claim \ref{clm:number_complete} and the fact
	that $(B\setminus H_t,\P)$ satisfies (Q3) after step $t$. 
	Thus, Maker can claim an edge $v_i^Pu$ as proposed. Afterwards, we set $H_{t+1}:=H_t-v_i^Px$.
	Then $u$ has no edge towards $U$ in $B\setminus H_{t+1}$ and the edge $v_i^Px$ has no influence 
	on (Q1)--(Q3) anymore. We therefore conclude that $(B\setminus H_{t+1},\P)$
	is good after step $t+1$.	

\item[\textbf{R3.}] 
The existence of $v_i^Px$ is given as in case {R2}.
If there is a path $P_0\in \P_u$, then not both edges $v_1^{P_0}x,v_2^{P_0}x$ can be claimed by Breaker, as otherwise $P_0$ would force case {R2}. 
If otherwise $\P_u=\emptyset$, then analogously to the argument in case {R2}, we have $|U|>d_B(v_i^P,U)$.
So, in either case, Maker can claim an edge as proposed by the strategy. After the update of $\P$, we obtain analogously to the previous case that $(B\setminus H_{t+1})$ is good with $H_{t+1}:=H_t-v_i^Px$.

\item[\textbf{R4.}] As case {R1} does not occur, we know that $d_{B}(u,\End(\P_u\cup \P_f)),d_{B}(w,\End(\P_u\cup \P_f))<\delta n$. In particular, a)--c) cover all possible subcases. Moreover,
as $(B\setminus H_t,\P)$ satisfies (Q2) after step $t$, we must have $uw\in E(H_t)$.
	\begin{enumerate}[a)]
	\item Assume there is no edge $ux$ as proposed by the strategy. Then there must be at least $
			\delta n-1$ vertices in $\End(\P_u\cup \P_f)$ which have degree at least $|\{u,w\}|=2$
			in $B$, contradicting the fact that $(B\setminus H_t,\P)$ satisfies (Q3) after step $t$
			and $e(H_t)\leq a$. So, Maker can claim an edge $ux$ as proposed.
			In case	$xw\in E(H_{t})$, we set $H_{t+1}:=H_t-xw$.		
			Then, after the update $xw\notin E(\End(\P)\cup U)$ holds, i.e.\ this edge has no 
			influence on (Q1)--(Q3). Moreover, $u$ has no edge towards $U$ in $B\setminus H_{t+1}$
			after step $t+1$. Otherwise, we set $H_{t+1}:=H_t-uw$. To see that again
			$(B\setminus H_{t+1},\P)$ is good after step $t+1$, just observe the following:
			After step $t+1$, $u$ has exactly one neighbour in $U$ (namely $w$) 
			in the graph $B\setminus H_{t+1}$. But, as $xw\in E(B\setminus H_t)$ 
			and (Q3) was fulfilled by $B\setminus H_t$ after step $t$, we know that the other 
			endpoint of the path $P$ has no edges towards $U$ in $B\setminus H_{t+1}$. 
			Moreover, $d_{B\setminus H_{t+1}}(w,\End(\P_f\cup \P_u))\leq 
			d_{B}(w,\End(\P_f\cup \P_u)) <\delta n$
			is maintained, as in $B$, $w$ gains $u$ and looses $x$ as a neighbour in 
			$\End(\P_f\cup \P_u)$.

	\item By Claim~\ref{clm:number_complete} and since $(B\setminus H_t,\P)$ is good after step $t$, 
		we have that 
		$|\P_u\cup \P_f|\geq \frac{n}{k}-\delta n>3a+d_{B\setminus H_t}(u,\End(\P_u\cup \P_f))\geq
		2a+d_{B}(u,\End(\P_u\cup \P_f))$. In particular, there are at least $2a$
		paths $P\in \P_u\cup \P_f$ with $e_B(\End(P),u)=0$.
		As $e(H_t)\leq a$ and since $(B\setminus H_t,\P)$ satisfied (Q3) after step $t$,
		there needs to be such a path with $d_B(v_i^P,U)=0$ for some $i\in [2]$. 
		Obviously Maker can claim the edge $v_{3-i}^Pu$, as $e_B(\End(P),u)=0$.
		Now, set $H_{t+1}:=H_t-uw$. Then to see that $(B\setminus H_{t+1},\P)$
		is good after step $t+1$, just notice that $u$ has only one edge, $uw$, towards 
		$U$ in $B\setminus H_{t+1}$, while $d_{B\setminus H_{t+1}}(v_i^P,U)=0$. Moreover,
		$d_B(w,\End(\P_u\cup \P_f))<\delta n$ is guaranteed by the assumption 
		on $w$ for this case and since $w$ gains at most one new neighbour among $\End(\P)$, namely 
		$u$.		
		
		\item As the cases {R2} and {R3} do not occur, we have
		$d_B(v_{3-i}^P,U)=0$ and $d_B(v_{i}^P,U)\leq 1$. In particular, one of the edges
		$v_i^Pu,v_i^Pw$ is free, so Maker can follow the proposed strategy.
		W.l.o.g.\ let Maker claim $v_i^Pu$. Set $H_{t+1}:=H_t-uw$.
		Then after step $t+1$, $u$ has exactly one edge towards $U$ (namely $uw$) 
		in $B\setminus H_{t+1}$, while $d_{B\setminus H_{t+1}}(v_{3-i}^P,U)=0$.
		Moreover, $d_B(w,\End(\P_u\cup \P_f))<\delta n$ is guaranteed as
		in the previous case. Therefore, we conclude analogously that $(B\setminus H_{t+1},\P)$
		is good after step $t+1$.
 	\end{enumerate}

\item[\textbf{R5.}] As the cases {R1}--{R4} do not occur,
$(B,\P)$ is good after step $t$. Therefore, Maker can easily follow the proposed
strategy and afterwards $(B\setminus H_{t+1},\P)$ is good for every graph $H_{t+1}$.
\end{itemize}
So, in either case Maker can follow the proposed strategy for Stage I.\\

{\bf Stage II.} At any moment throughout Stage II, let $p:=|\P_f|$ and let $br$ denote the number of bad edges. By the definition of $\varphi$, it always holds that $br\leq p+\varphi$. Moreover,
before every move of Maker in this stage, we have $p\geq a+1$, and therefore $p\geq 2$ holds immediately before $e_{end}$ is claimed.\\

By induction on the number of rounds, $i$, we now prove that Maker can follow the proposed strategy
and always maintains (F1). We can assume that (F1) was already satisfied after Maker's last move in Stage I, using (Q3). Now, assume that Maker's $i^{\text{th}}$ move happens in Stage II. As, by induction, (F1) was satisfied immediately after her previous move and as
Breaker afterwards claimed only $a$ edges in his previous move, we know that
$\varphi\leq a<p$ before Maker's move, and therefore $br\leq p+\varphi<2p$. 
We now observe that such a relation can be maintained as long as Maker can follow her strategy.

\begin{claim}\label{clm:StageII}
Assume that Maker can follow her strategy. Then, after $e_j$ is claimed and $\P_f, U$ are updated accordingly, $\varphi<p$ and $br<2p$ hold.
\end{claim}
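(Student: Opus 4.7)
The plan is to prove $\varphi < p$ by induction on the step number $j$ within the current round, and to deduce $br < 2p$ from the auxiliary bound $br \le p + \varphi$, which I claim holds at every moment. For the bound, note that since the paths in $\P_f$ are vertex disjoint, each bad edge is incident to exactly one endpoint of exactly one path, so $br = \sum_{P \in \P_f}(d_B(v_1^P,U) + d_B(v_2^P,U))$. Each $P \notin \P_b$ contributes at most $1$ to this sum, while each $P \in \P_b$ contributes exactly $1$ more than its contribution to $\varphi$, and summing gives $br \le p + \varphi$. Hence it suffices to show $\varphi' < p'$ after each step, where primes denote post-step values.

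The base case (before the first step of the round) is precisely the hypothesis recorded just above the claim: $\varphi \le a < p$. For the inductive step, observe first that $p' = p - 1$, and second that $\varphi$ can only weakly decrease during a single step. Indeed, when Maker claims $e_j = uv$ with $v \in \End(P)$, $P \in \P_f$, and $u \in U$, the graph $B$ is unchanged, while $\P_f$ loses $P$ and $U$ loses $u$. So no path outside $\P_b$ can be promoted into $\P_b$, and every path $Q \in \P_b$ that survives in $\P_f \setminus \{P\}$ has $d_B(v_1^Q,U) + d_B(v_2^Q,U)$ weakly decreasing as $U$ shrinks. A term-by-term comparison yields $\varphi' \le \varphi$.

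The rest splits on $\varphi$. If $\varphi > 0$, rule (a) of Maker's strategy forces a strict decrease: $\varphi' \le \varphi - 1 \le p - 2 < p'$ by the induction hypothesis. If $\varphi = 0$, then $\varphi' = 0 < p'$, using that $p' \ge 1$; the latter follows from the hypothesis $p \ge a+1$ at the start of the round together with $j \le a$, which guarantees $p \ge 2$ immediately before the current step. Applying the auxiliary inequality to the post-step state then gives $br' \le p' + \varphi' < 2p'$, completing the induction.

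I do not expect any real obstacle in this claim, as the argument is essentially bookkeeping: the delicate work sits earlier, in ensuring that Maker can actually find an edge satisfying rules (a)--(c), which is precisely the standing assumption of the claim. The only point requiring care is the monotonicity of $\varphi$ within a single step, and this is immediate from the fact that $B$ is static during Maker's move while $\P_f$ and $U$ only lose one element each.
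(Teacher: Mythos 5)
Your proof is correct and follows essentially the same route as the paper: induction over the steps of the round with base case $\varphi\leq a<p$, the auxiliary bound $br\leq p+\varphi$, and the case split $\varphi=0$ versus $\varphi>0$ using rule (a) together with $p'=p-1$. You merely spell out a few points the paper leaves implicit (the path-by-path derivation of $br\leq p+\varphi$, the monotonicity of $\varphi$ within a step, and $p'\geq 1$ via $p\geq a+1$), all of which are sound.
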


\begin{proof}
For induction assume that $\varphi<p$ and $br<2p$ hold immediately after $e_{j-1}$ is claimed. When Maker claims $e_j$, two cases may occur. If $\varphi=0$ holds, then after $e_j$ is claimed, we still have $\varphi=0$ and $br\leq p+\varphi<2p$. Otherwise, we have $\varphi>0$, in which case Maker claims an edge that decreases the value of $\varphi$. As $p$ decreases by one within one step of Maker, we thus obtain that $\varphi<p$ and $br\leq p+\varphi<2p$ are satisfied after all updates.
\end{proof}

\medskip

With this claim in hand, we can deduce that Maker can always follow the proposed strategy. 

Consider first that $e_j\neq e_{end}$. We note that each path in $\P_f$ and
each vertex in $U$ is incident with $2p>br$ edges from $E(\End(\P_f),U)$.
Thus each such path and each such vertex intersects at least one good edge.
Let $v$ be a vertex of the largest Breaker's degree among all vertices in $\End(\P_f)\cup U$. 

Assume first that $v\in \End(P)$ for some $P\in\P_f$. If $\varphi=0$ or $P\in P_b$, then Maker claims an arbitrary good edge $e_j\in E(\End(P),U)$ which exists as explained above. Just note that in case $P\in P_b$, the value of $\varphi$ will be decreased, as $P$ gets removed from $\P_b$. Moreover,
$v$ is contained in the path $P$ (after the update).
Otherwise, if $\varphi>0$ and $P\notin P_b$, we find some bad path $P_0\neq P$ and some bad edge $u_0v_0$ with $u_0\in U$ and $v_0\in\End(P_0)$.
Then Maker claims an edge $e_j=xu_0$, with $x\in \End(P)$ and $d_B(x,U)=0$. Such a vertex exists, as we assumed $P\notin P_b$. Again this decreases $\varphi$ by (at least) one, as 
$d_B(v_1^{P_0},U)+d_B(v_2^{P_0},U)$ decreases by (at least) one after $u_0$ is removed from $U$;
and again $v$ is contained in the updated path $P$. 

Assume then that $v\in U$. If $\varphi=0$, then Maker can choose $e_j$ to extend an arbitrary path in $\P_f$ by the vertex $v$, which is possible as there are no bad paths. So, assume that there is a path $P\in P_b$. If there is a good edge in $E(v,\End(P))$,
Maker claims such an edge and then $\varphi$ decreases as $P$ is removed from $\P_f$, and
$v$ again is contained in the updated path $P$.
If there is no such good edge, then Maker claims an arbitrary good edge
between $v$ and some path $P_0\in \P_f\setminus \{P\}$. Then, $\varphi$ decreases as
$e_B(v,\End(P))=2$ and $v$ gets removed from $U$, and $v$ finally belongs to the updated path $P_0$.\\
Consider then that $e_j=e_a=e_{end}$, and recall that $p\geq 2$ before Maker claims $e_{end}$.
We know that $\varphi\leq a$ holds before Maker's
first step in round $i$. As Maker decreased the value of $\varphi$ by at least one,
in case $\varphi>0$, with every previous edge in this round, we know that $\varphi\leq 1$
immediately before she wants to claim $e_{end}$. W.l.o.g.\ let $\varphi =1$, and let
$P_0$ be the unique bad path. Note that then $e_B(\End(P_0),U)=2$.
Moreover, let $P\in \P_f\setminus \{P_0\}$. If $e_B(\End(P),U)=0$,
then Maker extends $P_0$ by an arbitrary edge $e_{end}$ (which is possible as $br<2p$).
Then, after the update, $\varphi=0$ holds as $P_0$ is removed from $\P_f$, and $P$ satisfies 
$e_B(\End(P),U)=0$. Otherwise, we have $e_B(\End(P),U)=1$ as $P\notin \P_b$,
and thus there is a unique vertex $u$ such that
$e_B(\End(P),u)=1$. If $e_B(\End(P_0),u)\leq 1$ holds, then Maker claims an edge
$e_{end}=uv_i^{P_0}$ with $i\in [2]$. Afterwards, $P$ satisfies $e_B(\End(P),U)=0$
as $u$ gets removed from $U$;
$\varphi=0$ holds as $P_0$ is removed from $\P_b$. Otherwise, we
have $e_B(u,\End(P_0))=2=e_B(U,\End(P_0))$ as $\varphi=1$. In this case Maker just 
claims a free edge $e_{end}=uv_i^P$ with $i\in [2]$ (which is possible as
$e_B(\End(P),u)=1$). Then, $P_0$ satisfies
$e_B(\End(P_0),U)=0$ after $u$ is removed from $U$, and as $P_0$ is not bad anymore, i.e.\ $\varphi=0$.\\

In total, we see that in either case Maker can claim the edges $e_j$ as proposed. Finally,
Property (F1) always holds immediately after $e_a$ is claimed. For this just recall that $\varphi\leq a$ holds immediately before a Maker's move, and that Maker reduces $\varphi$ by at least one in each step as long as $\varphi>0$ holds. That is, we obtain $\varphi=0$ at the end of her move, which makes Property (F1) hold.\\

{\bf Stage III.} Finally, we prove that Maker can finish a $P_k$-factor within one additional round.
We start with the following claim.

\begin{claim}
\label{clm:StageIII}
Before Maker's move in Stage III, $d_B(v)<2\delta n$
holds for every $v\in (U \cup End(\P_f))$.
\end{claim}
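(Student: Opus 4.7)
The plan is to assume for contradiction that some $v\in U\cup \End(\P_f)$ has $d_B(v)\geq 2\delta n$ at the moment in question (right after Breaker's move in the Stage III round, before Maker acts), and to derive too many Breaker edges. The first key observation is that during Stage II the set $S:=U\cup \End(\P_f)$ only shrinks: $\P_u=\emptyset$ throughout this stage, and each of Maker's steps turns some $P\in \P_f$ into a complete path while removing the matched $u\in U$, so the three vertices $u$, $v_1^P$, $v_2^P$ all leave $S$ and nothing ever enters. Moreover, rule (b) of Stage II guarantees that at every step $e_j\neq e_{end}$ the vertex of largest current Breaker-degree in $S$ lies in $\End(P)\cup\{u\}$ and is therefore among the three that leave $S$. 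In particular, $v$ lies in $S$ throughout Stage II and is never the vertex that rule (b) selects.

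I would then zoom in on the final $r:=\lceil \delta n/(2a)\rceil$ rounds of Stage II. This window fits inside Stage II because Claim~\ref{clm:number_complete} gives $T<\delta n$, hence $R_{II}:=\lceil(n/k-T)/a\rceil-1\geq 7n/(8ka)-1\geq r$ for $n$ large, where the last inequality uses $\delta<1/(8k)$. Between the beginning of this window and the time point fixed by the claim, Breaker plays at most $r+1$ moves (the $r$ Breaker moves inside the window plus the Stage III one), each adding at most $a$ to $d_B(v)$. So at every rule-(b) step inside the window, $v$'s current Breaker-degree is at least $2\delta n-(r+1)a\geq 3\delta n/2-2a$. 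Because $v$ is never the vertex that gets selected, the actually selected vertex must have current Breaker-degree at least as large, and hence final Breaker-degree at least $3\delta n/2-2a$ (Breaker-degrees are monotone in time).

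Finally I would count: the window contains at least $ra-1\geq \delta n/2-1$ rule-(b) steps, and the vertices selected by distinct steps are themselves distinct (once removed from $S$, a vertex never re-enters to be picked again). This produces at least $\delta n/2 - 1$ vertices different from $v$, each of final Breaker-degree at least $3\delta n/2-2a$, and together with $v$ itself (of degree at least $2\delta n$) gives a Breaker-degree sum of order $\delta^2 n^2$. On the other hand, the total number of rounds played so far is bounded by $\lceil(k-1)n/(ka)\rceil$, yielding $e(B)\leq (k-1)n/k+a$ and a total Breaker-degree sum of order $n$. For $n$ large enough (depending on $k$, $a$ and $\delta$) this is the desired contradiction. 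The delicate point is the timing of degree updates: one must ensure that the lower bound $3\delta n/2-2a$ for $v$'s current degree genuinely holds across an entire window of $r$ rounds, and that the window still fits within Stage II. This is precisely what forces the choice $r=\lceil\delta n/(2a)\rceil$ and what leverages the assumption $\delta<1/(8k)$.
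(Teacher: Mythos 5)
Your argument is correct and follows essentially the same route as the paper: assume a vertex $v\in U\cup \End(\P_f)$ of Breaker-degree at least $2\delta n$ survives to Stage III, observe that rule (b) then forces each step in a window of final Stage II rounds to remove a distinct vertex of Breaker-degree at least comparable to $v$'s, and conclude that Breaker would have claimed too many edges. The only difference is quantitative: the paper uses a constant window of $\lfloor 3/\delta\rfloor$ rounds (already producing enough vertices of degree above $\delta n$ to force more than $n$ Breaker edges), while you take a window of $\lceil \delta n/(2a)\rceil$ rounds and reach the contradiction via a quadratic degree-sum bound.
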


\begin{proof}
Suppose that the statement does not hold, i.e.\ there exists a vertex $ v\in U \cup End(\P_f)$
such that $d_B(v)\geq 2\delta n$. Then, during the last $\left\lfloor 3/\delta \right\rfloor$ rounds
of Stage II, $d_B(v)>\delta n$. However, in each step of these rounds (except
when claiming $e_{end}$), Maker included a vertex $w$ into some
complete path for which $d_B(w)\geq d_B(v)>\delta n$ was satisfied (see (b)). But then,
Breaker would have claimed more than $n$ edges, a contradiction to the number
of edges he could claim in all rounds so far. 
\end{proof}

\medskip

With this claim in hand, we are able to describe how to finish a $P_k$-factor within one further round. 
We distinguish between the following cases depending on the size of $U$.\\

\textbf{Case 1: $\mathbf{0<|U|\leq a/2}$.} We denote the isolated vertices in $U$ by $u_1,u_2,\ldots, u_t$, and the paths in $\P_f$ by $P_1\dots,{P_t}$. Using Claim \ref{clm:StageIII},
for every $i\in [t]$, we find at least $|\P_c|-4\delta n>n/(4k)$ paths $R\in \P_c$ such that
$u_iv_1^{R}, v_2^{R}v_1^{P_i}$ are free. We thus can fix $t$ distinct paths $R_1,\ldots,R_t\in \P_c$ such that, for every $i\in [t]$, the edges $u_iv_1^{R_i}, v_2^{R_i}v_1^{P_i}$ are free. Maker claims these edges, in total at most $a$, and by this completes a $\pkf$, as $V(R_i)\cup V(P_i)\cup \{u_i\}$ contains a copy of $P_{2k}$ for every $i\in [t]$.

\textbf{Case 2: $\mathbf{a/2<|U|=a}$.} Let $\G=(\P_f\cup U,E(\G))$ be the bipartite graph where
two vertices $u\in U$ and $P\in \P_f$ form an edge if and only if $d_B(u,\End(P))\leq 1$.
Then $e(\overline{\cal G})\leq a-1$ holds, as after Maker's last move in Stage II
we had $e_B(\End(\P_f),U)\leq a-1$ (by (F1) and (c)), 
while Breaker afterwards claimed at most $a$ bad edges. 
By the theorem of K\"onig-Egervary (see e.g. \cite{West}) we thus obtain that $\cal G$
contains a matching of size at least
$$\frac{|U|^2-e(\overline{\cal G})}{|U|}>\begin{cases}
|U|-1, & \text{if $|U|=a$}\\
|U|-2, & \text{if $a/2<|U|<a$.}\\
\end{cases}$$
So, in case $|U|=a$, we have that ${\cal G}$ contains a perfect matching,
say $u_1P_1, u_2P_2,\ldots, u_aP_a$. Then Maker claims a good edge in $E(u_i,\End(P_i))$
for every $i\in [a]$, and by this creates a copy of $\pkf$.
Otherwise, in case $a/2<|U|\leq a-1$, we find a matching of size $|U|-1$,
say $u_1P_1, u_2P_2,\ldots, u_{|U|-1}P_{|U|-1}$. Maker then claims
a good edge in $E(u_i,\End(P_i))$ for every $i\in [|U|-1]$, in total at most $a-2$ edges.
For the remaining (unique) vertices $u\in U$ and $P\in P_f$ that are not covered by the matching,
we proceed as in Case 1: we find a path $R\in \P_c$ such that the edges $uv_1^{R}, v_2^{R}v_1^{P}$ are free, which then Maker claims to complete a $P_k$-factor.
\end{proofof}

\section{Weak $S_k$-factor game}
\label{wSkf}
\begin{proofof}{Theorem~\ref{weakSk}} 

In the following we give a strategy for Maker in the $\skf$ game. Afterwards,
we prove that she can follow that strategy and win in the claimed number of rounds.\\

\textbf{Maker's strategy.} Maker makes the $k$-star factor by gradually increasing the size of 
$n/k$ stars, so that at any point of the game, no two disjoint stars differ in size by more than one. Before the game starts, she splits the vertex set into three sets $C$, $R$ and $F$, that are dynamically maintained. 
$C$ represents the centres of the stars in the star factor, $F$ contains the endpoints of the current stars, and $R$ are the remaining isolated vertices in Maker's graph. At the beginning of the game, $F:=\emptyset$, $C$ contains $n/k$ arbitrary chosen vertices and $V=C \cup R$. All the star centres have degree $0$ at the beginning of the game. Maker's strategy consists of the following two stages.

\medskip

\textbf{Stage I.} 
Maker divides this stage into phases $1,2,\dots, k-1$. 
The game starts in phase $1$, when all vertices in $C$ have degree $0$ in Maker's graph. 
In phase $i$, $1\leq i \leq k-1$, she makes the vertices in $C$ get degree exactly $i$ in her graph.  
The phase $i$, $1\leq i \leq k-2$ finishes (and the phase $i+1$ starts) immediately after the step 
in which the last vertex of $C$ reached degree $i$. So, it might happen that Maker switches from phase $i$ to phase $i+1$ between two steps of the same move.
In particular, she will play exactly $n/k$ consecutive steps (from consecutive rounds) 
in each of her first $k-2$ phases. Finally, Stage I ends immediately after the Maker's move 
in which phase $k-2$ ended, 
i.e.\ when all the vertices in $C$ have degree at least $k-2$ in Maker's graph. (Note that
it might happen that phase $k-1$ consists of zero steps.)

\medskip

To describe Maker's strategy more precisely, we let $C_A$ always denote the subset of $C$ 
containing the vertices of smallest degree in her graph. That is, in phase $i$, $C_A$ contains those vertices that are centres of stars of size $i-1$.
At the beginning of each phase, $C_A=C$. Moreover, we call a Breaker's edge $e$ {\em bad} if $e\in E(C,R).$

\medskip

Assume now, Maker wants to make her $j^{th}$ move in Stage I. 
Let $t$ be the number of elements in $C_A$ right at the beginning of her move.
Maker iteratively chooses the edges $e_1=e_1^{(j)},\ldots, e_a=e_a^{(j)}$ 
of her $j^{\text{th}}$ move in the following way: For every $1\leq s\leq a$, she first sets $t:=|C_A|$. Then,
\begin{enumerate}[(1)]
\item if there is a free edge $e_F\in E(C_A,R)$ such that $\emptyset\neq e_F\cap e_B\in R$ for some bad edge $e_B,$
then Maker chooses $e_s$ to be such an edge $e_F$. Let $x_s\in C_A$ and $y_s\in R$ be the vertices of $e_s$. 
\item Otherwise, she chooses $e_s=x_sy_s$ arbitrarily
with $x_s\in C_A$ and $y_s\in R$.
\item Afterwards, she updates $R:=R\setminus \{y_s\}$, $F:=F\cup \{y_s\}$ and 
$C_A:=C_A\setminus \{x_s\}$ if $t\neq 1$, or $C_A:=C$ if $t= 1$,
before she proceeds with $e_{s+1}$.
\end{enumerate}

This stage lasts $\left\lceil (k-2)n/(ak)\right\rceil$ rounds. \\

\noindent \textbf{Stage II.} When Maker enters Stage II, her graph consists of stars of size $k-2$ and $k-1$.
Moreover, $$|C_A|=|R|=\frac{(k-1)n}{k}-a\left\lceil\frac{(k-2)n}{ak}\right\rceil =:N.$$ Maker now
completes her $k$-star factor, by claiming a perfect matching between $C_A$ and $R$
in the following $\lfloor N/a \rfloor +1$ rounds. The details follow later in the proof.

\medskip

It is easy to see that if Maker can follow the strategy, she wins the $\skf$ within the claimed number of rounds. 
Indeed, the number of rounds Stage I and II last together is 
\begin{align*}
\left\lceil\frac{(k-2)n}{ak}\right\rceil + \lfloor N/a \rfloor +1 
	= \left\lfloor \frac{(k-1)n}{ak} \right\rfloor +1
	= \begin{cases}
	\lceil \frac{(k-1)n}{ak} \rceil, & \text{ if }ak\nmid (k-1)n\\
	\lceil \frac{(k-1)n}{ak} \rceil +1, & \text{ otherwise.}\\
	\end{cases}
\end{align*}
So, to finish the proof, we show separately for each stage that Maker can follow the proposed strategy.\\

\noindent \textbf{Stage I.}  
In order to show that Maker can follow her strategy in this stage, we first prove that 
as long as she can follow the strategy, the number $e_B(C,R)$ of bad edges cannot be too large.
Based on this, we then conclude that Maker indeed can follow her strategy. 

Before this, it is useful to observe the following:
If $|C_A|>e_B(C,R)$ holds before Maker claims an edge, 
we know that each vertex in $R$ has a free neighbour
in $C_A.$ Thus, Maker can easily claim an edge according to $(1)$
if there exists some bad edge. Moreover, this bad edge
then disappears from the set of bad edges when $F$ and $R$ are updated.
Thus, the number of bad edges decreases,
and again $|C_A|>e_B(C,R)$ holds. 
In particular, we can continue this way. So, if $|C_A|>e_B(C,R)=:b'$ holds 
at the beginning of a Maker's move,
then Maker in her whole move decreases the number of bad edges 
by at least $\min\{a,b'\}$. Using this,
we prove our first claim.

\begin{claim}
\label{cl:constBound}
Assume that Maker can follow the proposed strategy.
Then, for each $1\leq i\leq k-2$,
there exists a constant upper bound $c=c(a,i)$ for the number of bad edges 
throughout the phase $i$. 
\end{claim}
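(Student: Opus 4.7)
The plan is to establish the claim by induction on the phase number $i$, setting $c(a,0):=a$ and $c(a,i):=2c(a,i-1)+2a$. For the base case $i=1$, since Breaker has bias $a$ and starts the game, at most $a$ bad edges exist at the start of Maker's first move; the inductive argument below (with $c(a,0)$ in place of $c(a,i-1)$) then gives the bound $c(a,1)$ throughout phase $1$. The main inductive tool is the observation stated immediately before the claim: whenever $|C_A| > e_B(C,R)$ holds at the beginning of some Maker step, she can apply rule (1), and this strict inequality is preserved by the update, since both $|C_A|$ and $e_B(C,R)$ drop by at least one at that step. Hence if $b$ denotes the number of bad edges at the start of a Maker move and $b < |C_A|$ holds there, then rule (1) is applied in every one of the $a$ steps of that move, and $b$ is reduced by $\min\{a,b\}$ by the end of the move.

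Assuming $b \leq c(a,i-1)$ at the start of phase $i$, I plan to split the phase into two segments. In the \emph{main} segment, comprising all Maker moves for which $|C_A| > c(a,i-1) + a$ at the start, Breaker's at most $a$ new bad edges per round are fully offset by Maker in the same round, so $b$ stays bounded by $c(a,i-1)+a$ throughout. Since $|C_A|$ decreases by exactly $a$ per round of phase $i$, this segment covers all but the final $\lceil (c(a,i-1)+a)/a \rceil + 1$ rounds, a constant depending only on $a$ and $i$. In the remaining \emph{terminal} segment, rule (1) may fail at some steps and Maker may be forced into rule (2), which removes no bad edges. Across these constantly many rounds, Breaker adds at most $a$ bad edges per round, so $b$ grows by at most a further constant of the same order. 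Summing the two contributions yields $b \leq c(a,i)$ throughout phase $i$, which closes the induction.

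The main obstacle I expect is justifying carefully that $|C_A|$ evolves monotonically within a single phase, so that the threshold $|C_A| \leq c(a,i-1)+a$ is crossed only once — near the end of the phase — rather than multiple times through intermediate resets of $C_A$. This needs to be read off the update rule in the strategy description: within phase $i$, $C_A$ loses exactly one vertex per Maker step, and the only reset $C_A:=C$ is executed at the step where $t=1$, which by definition terminates phase $i$. Once this monotonicity is in hand, the two-part estimate above goes through cleanly and yields the claimed constant bound $c(a,i)$. A small secondary point to verify is that a phase may begin mid-move (as the paper notes), but this only shifts the analysis by at most $a$ bad edges, absorbed comfortably into the constants.
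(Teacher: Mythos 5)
Your proof is correct and takes essentially the same route as the paper's: induction on the phase, the same key observation stated before the claim, and a split of phase $i$ into the long initial segment where $|C_A|>c(a,i-1)+a$ (so Maker offsets Breaker's new bad edges every round) followed by a constant-length tail in which Breaker can add only a constant number of bad edges. The only difference is that you make the constants explicit; your particular recursion $c(a,i)=2c(a,i-1)+2a$ may need a slightly larger additive term to account for all rounds of the tail segment, but since the claim asserts only the existence of some constant bound this is immaterial.
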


\begin{proof}
The proof goes by induction on $i$. Set $c(a,0):=0$.
There can be at most $c(a,i-1)+a$ bad edges before the first Maker's move
that happens completely in phase $i$, either by induction hypothesis (when $i>1$) or as Breaker claims $a$ edges in his first move (when $i=1$). Now, by the observation above, we know that as long as $|C_A|> e_B(C,R)$, Maker can reduce the number of bad edges by (at least) $\min\{e_B(C,R),a\}$ in each round, 
while Breaker can increase this number
by at most $a$. Thus, as long as $|C_A|>c(a,i-1)+a$ holds before some Maker's move, 
her strategy ensures that after such a move, $e_B(C,R)\leq c(a,i-1)$ holds.
Just when $|C_A|\leq c(a,i-1)+a$ holds before a move of Maker, it might happen that Maker cannot reduce the number of bad edges anymore. 
But then, the number of remaining steps in phase $i$ is bounded by a constant,
and Breaker can add only another constant number of bad edges until phase $i$ ends, giving some constant bound $c(a,i)$. Thus, before any Maker's move in phase $i$ there cannot be more than $c(a,i)$ bad edges, which completes the claim.
\end{proof}

\begin{claim}
For large $n$, Maker can follow the strategy of Stage I.
\end{claim}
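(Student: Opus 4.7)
The plan is to prove, by induction on Maker's steps in Stage~I, that at every step Maker can claim an edge as demanded by the strategy. Before each step I verify three properties: (i) $C_A\neq\emptyset$, (ii) $R\neq\emptyset$, and (iii) at least one free edge exists in $E(C_A,R)$. Property (i) is immediate because step~(3) of the strategy resets $C_A:=C$ precisely when $t=1$, so $|C_A|\geq 1$ is maintained throughout. For (ii), I observe that Stage~I consists of at most $(k-2)n/k$ steps in total (one vertex moves from $R$ to $F$ per step, across phases $1,\dots,k-2$), giving $|R|\geq n-|C|-(k-2)n/k=n/k\geq 1$ for $n\geq k$.

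For (iii), I would invoke Claim~\ref{cl:constBound}: as Maker is assumed inductively to have followed the strategy up to the current step, the number of bad edges is bounded by a constant $c=c(a,i)$ depending only on $a$ and the current phase $i$. Adding Breaker's most recent $a$ edges gives $e_B(C_A,R)\leq c+a$. Crucially, Maker's own graph contains no edges in $E(C,R)$, since every Maker-edge has one endpoint in $C$ and the other in $F$ (the chosen $y_s$ is moved from $R$ to $F$ immediately after each step). Hence the number of free edges in $E(C_A,R)$ is at least
\[
|C_A|\cdot|R|-e_B(C_A,R)\geq \frac{n}{k}-c-a,
\]
which is positive for sufficiently large $n$. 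If the hypothesis of rule~(1) holds, Maker picks any such $e_F$; otherwise she falls through to rule~(2) and picks an arbitrary free edge in $E(C_A,R)$, which exists by the bound above.

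The main (albeit minor) obstacle is the apparent circularity between Claim~\ref{cl:constBound} and the present claim: the former is conditional on Maker following the strategy, yet it is used here to show she can. I would resolve this by a joint induction on the step number of Stage~I. Namely, at step $s$ the inductive hypothesis ``Maker has played steps $1,\dots,s-1$ according to the strategy'' allows Claim~\ref{cl:constBound} to be applied to the board state just before step~$s$, and the edge-counting argument above then furnishes a legal move for step~$s$, closing the induction.
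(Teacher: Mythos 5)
Your proposal is correct and follows essentially the same route as the paper: induction over Maker's Stage~I steps, invoking Claim~\ref{cl:constBound} for the constant bound on bad edges together with $|C_A|\geq 1$ and $|R|=\Omega(n)$ to guarantee a free edge in $E(C_A,R)$ at every step (the paper counts vertices of $R$ untouched by bad edges, you count free edges directly, which is an immaterial difference). Your explicit resolution of the apparent circularity with Claim~\ref{cl:constBound} is exactly what the paper does implicitly by assuming the strategy was followed in the first $j$ rounds.
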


\begin{proof}
Assume that Maker could follow the strategy for the first $j$ rounds.
Our goal is to show that she can do so in round $j+1$ as well.
For this, observe that before Maker's move, $|R|\geq n/k$ holds.
Moreover, the number of bad edges is bounded by a constant, 
according to the previous claim. So, provided $n$ is large enough, 
there exist more than $a$ vertices in $R$ that 
are not incident with bad edges.
In particular, in each step of her $(j+1)^{\text{st}}$ move,
Maker can claim an edge according to $(2)$. Thus,
she can follow the strategy. 
\end{proof}

\medskip

\textbf{Stage II.} Observe that when Maker enters Stage II, the number of bad edges 
is at most $c(a,k-2)+a$, according to the Claim~\ref{cl:constBound}.
Moreover, $N=|C_A|=|R|\geq n/k-a$.
Provided that $n$ is large enough, Proposition~\ref{WeakPMstronger} (with $G$ being the bipartite graph induced by the free edges
between $C_A$ and $R$)
now ensures that Maker has a strategy to create 
the desired matching within $\lfloor N/a\rfloor +1$ rounds.
\end{proofof} 
\section{Conculding remarks and open problems}

{\bf Star factor game.} Theorem \ref{weakSk} tells us that
$\tau_{\skf}(a:a)\in \{ (k-1)n/(ka),(k-1)n/(ka)+1\}$ for large enough $n$, in case $ak|(k-1)n$. In fact, in can be checked that there are pairs $(a,k)$ were the first value occurs, while there exist pairs $(a,k)$ for which it does not. It would be interesting to describe all the pairs $(a,k)$ for which Maker cannot win the $(a:a)$ $\skf$-game perfectly fast and to determine a winning strategy for the first player in the corresponding strong version in these cases.\\

{\bf $H$-factors.} More generally, it seems to be challenging to describe
$\tau_{\F}(a:a)$ in case $\F$ is the family of $H$-factors, for any given graph $H$ not being a forest.
Even in the case $a=1$ not so much is known. In all the games $\F$ we studied here,
it happens that $\tau_{\F}(a:a)=(1+o(1))\tau_{\F}(1:1)/a$. We wonder whether there exist families $\F$ of spanning subgraphs of $K_n$, where such a relation does not hold.


\end{document}